\newtheorem{theorem}{Theorem}[section]
\newtheorem{lemma}[theorem]{Lemma}
\newtheorem{proposition}[theorem]{Proposition}
\newtheorem{corollary}[theorem]{Corollary}
\theoremstyle{definition}
\newtheorem{definition}[theorem]{Definition}
\theoremstyle{remark}
\numberwithin{equation}{section}
\newcommand{\h}{\mathcal{H}}
\begin{document}
\title{Multipliers of pg-Bessel sequences in Banach spaces}

\author{ M.R. Abdollahpour\footnote{ Department of Mathematics, Faculty of Mathematical Sciences, University of Mohaghegh Ardabili, Ardabil 56199-11367, Iran;  \ Email: {\tt mrabdollahpour@yahoo.com }},
A. Najati\footnote{ Department of Mathematics, Faculty of Mathematical Sciences, University of Mohaghegh Ardabili, Ardabil 56199-11367, Iran;  \ Email: {\tt a.nejati@yahoo.com}} and
P. G\u avru\c ta\footnote{ Department of Mathematics, Politehnica University of Timi\c soara, Pia\c ta Victoriei, Nr. 2, 300006, Timi\c soara, Romania; \ Email: {\tt pgavruta@yahoo.com }}}
{ }
\date{}
\maketitle

\begin{abstract}
In this paper, we introduce $(p,q)g-$Bessel multipliers in Banach
spaces and we show that under some conditions a $(p,q)g-$Bessel
multiplier is invertible. Also, we show the continuous dependency of
$(p,q)g-$Bessel multipliers on their parameters.
\end{abstract}

\textbf{Keywords.} $p$-frames, $g$-frames, $pg$-frames, $qg$-Riesz
bases, $(p,q)g$- Bessel multiplier.

\textbf{AMS Subject Classification.} Primary 47B10, 42C15; Secondary 47A58.

\section{Introduction}
Frames have been introduced by J. Duffin and A.C. Schaeffer in \cite{DS}, in
connection with non-harmonic Fourier series. A frame for a Hilbert space is a redundant set of vectors which yield, in a stable way, a representation for each vector in the space. The frames have many
nice properties which make them very useful in the characterization
of function space, signal processing and many other fields. See the book \cite{OCh} and the references of the paper \cite{PG}. The concept of frames was extended to Banach spaces by K. Gr\"ochenig in \cite{Gro} to develop atomic decompositions from the paper \cite{FG}. See also \cite{AlST}, \cite{Cas}, \cite{chst03}, \cite{chstheil}.
%%%%%%%%%%%%%%%%%%%%%%%%%%%%%%%%%%%%%%%%%%%%%%%%%%%%%%%%%%%%%%%%%%%%%%%%%%%%%%%%%%%%%%%%%%%%%%%%%%%%%%%%%%%%%%%%%%%%%%
\begin{definition}
Let $X$ be a Banach space. A countable family $\{g_{i}\}_{i\in
I}\subset X^{*}$ is a $p$-frame for $X$,
$1<p<\infty$, if there exist constants $A,B>0$ such that
$$A\|f\|_{X}\leq (\sum_{i\in I}|g_{i}(f)|^{p})^{\frac{1}{p}}\leq
B\|f\|_{X},\quad f\in X.$$
\end{definition}
$G$-frame as a natural generalization of frame in Hilbert spaces,
were introduced by Sun \cite{WS} in 2006. $G$-frame cover many previous
extensions of a frame. For some properties of $g$-frames, we can
refer to \cite{ab, an, ani}.
%%%%%%%%%%%%%%%%%%%%%%%%%%%%%%%%%%%%%%%%%%%%%%%%%%%%%%%%%%%%%%%%%%%%%%%%%%%%%%%%%%%%%%%%%%%%%%%%%%%%%%%%%%%%%%%%%%%%%%
\begin{definition}
Let $\h$ be a Hilbert space and $\{\h_{i}\}_{i\in I}$ be a sequence
of Hilbert spaces. We call a sequence $\{\Lambda_{i}\in
B(\h,\h_{i}): i\in I\}$ a $g$-frame for $\h$ with respect to
$\{\h_{i}\}_{i\in I}$ if there exist two positive constants $A$ and
$B$ such that
$$A\|f\|^{2}\leq \sum_{i\in I}\|\Lambda_{i}f\|^{2}\leq B\|f\|^{2},
\quad f\in \h.$$ We call $A$ and $B$ the lower and upper $g-$frame
bounds, respectively. We call $\{\Lambda_{i}\}_{i\in J}$ a tight
$g$-frame if $A=B$ and Parseval $g$-frame if $A=B=1.$
\end{definition}
Bessel multipliers for Hilbert spaces are investigated by Peter
Balazs \cite{xxlmult1,peter2,peter3} . We use the following
notations for sequence spaces.
\begin{enumerate}
\item[(1)] $c_0=\{\{a_n\}_{n=1}^{\infty}\subseteq \mathbb{C}:{\lim}_{n\rightarrow \infty} a_n=0\};$
\item[(2)] $l^p=\{\{a_n\}_{n=1}^{\infty}\subseteq \mathbb{C}:\|a\|_p=(\sum_{n\in \mathbb{N}}|a_n|^p)
^{\frac{1}{p}} <\infty\}, 0<p<\infty;$
\item[(3)] $l^{\infty}=\{\{a_n\}_{n=1}^{\infty}\subseteq \mathbb{C}:\|a\|_{\infty}
=\sup_{n\in \mathbb{N}}|a_n|<\infty\}.$
\end{enumerate}
%%%%%%%%%%%%%%%%%%%%%%%%%%%%%%%%%%%%%%%%%%%%%%%%%%%%%%%%%%%%%%%%%%%%%%%%%%%%%%%%%%%%%%%%%%%%%%%%%%%%%%%%%%%%%%%%%%%%%%%
\begin{definition}
Let $\h_1$ and $\h_2$ be Hilbert spaces. Let
$\{f_i\}_{i=1}^{\infty}\subseteq \h_1$ and
$\{g_i\}_{i=1}^{\infty}\subseteq \h_2$ be Bessel sequences. Fix
$m=\{m_i\}_{i=1}^{\infty} \in l^{\infty}.$ The operator
$$\mathbf{M}_{m,\{f_i\},\{g_i\}}:\h_1\rightarrow \h_2,\quad \mathbf{M}_{m,\{f_i\},\{g_i\}}
(f)=\sum_{i=1}^{\infty}m_i \langle f,f_i\rangle g_i$$ is called the
Bessel multiplier of the Bessel sequences $\{f_i\}_{i=1}^{\infty}$
and $\{g_i\}_{n=1}^{\infty}.$ The sequence $m$ is called the symbol
of $\mathbf{M}$
\end{definition}
%%%%%%%%%%%%%%%%%%%%%%%%%%%%%%%%%%%%%%%%%%%%%%%%%%%%%%%%%%%%%%%%%%%%%%%%%%%%%%%%%%%%%%%%%%%%%%%%%%%%%%%%%%%%%%%%%%%%%%%
Multipliers for $p$-Bessel sequences in Banach spaces  were introduced in
\cite{peterasghar}. Also $g$-Bessel multipliers were investigated by
Rahimi \cite{gframura}. In this note, by mixing the concepts of
multipliers for $p$-Bessel sequences and $g$-Bessel multipliers, we
will define multipliers for the $pg$-Bessel sequences ($pg-$frames)
and we will investigate some of their properties.

In our opinion, it is possible that the results of this paper can be applied in Quantum Information Theory.
A beautiful presentation of the connections between frames and POVM is the paper \cite{Ruskai}. See also \cite{Scott}.
%%%%%%%%%%%%%%%%%%%%%%%%%%%%%%%%%%%%%%%%%%%%%%%%%%%%%%%%%%%%%%%%%%%%%%%%%%%%%%%%%%%%%%%%%%%%%%%%%%%%%%%%%%%%%%%%%%%%%%
\section{Review of pg-frames and qg-Riesz bases}
In \cite{AbFaRa}, $pg$-frames and $qg$-Riesz bases for Banach spaces
have been introduced . In this section, we recall some properties of
$pg$-frames and $qg-$Riesz bases from \cite{AbFaRa}. Throughout this
section, $I$ is a subset of $\mathbb{N}$, $X$ is a Banach space with
dual  $X^{*}$ and also $\{Y_{i}\}_{i\in I}$ is a sequence of Banach
spaces.
%%%%%%%%%%%%%%%%%%%%%%%%%%%%%%%%%%%%%%%%%%%%%%%%%%%%%%%%%%%%%%%%%%%%%%%%%%%%%%%%%%%%%%%%%%%%%%%%%%%%%%%%%%%%%%%%%%%%%%
\begin{definition}
We call a sequence $\mathbf{\Lambda}=\{\Lambda_{i}\in B(X,Y_{i}):
i\in I\}$ a $pg-$frame for $X$ with respect to $\{Y_{i}:i\in I\}$
$(1<p<\infty),$ if there exist $A,B>0$ such that
\begin{equation}\label{pgframe}
A\|x\|_{X}\leq\left(\sum_{i\in
I}\|\Lambda_{i}x\|^p\right)^\frac{1}{p}\leq B\|x\|_{X},\quad\forall
x\in X.
\end{equation}
$A,B$ is called the $pg$-frame bounds of $\{\Lambda_{i}\}_{i\in I}.$
\\
If only the second inequality in (\ref{pgframe}) is satisfied,
$\{\Lambda_{i}\}_{i\in I}$ is called  a $pg$-Bessel sequence for $X$
with respect to $\{Y_{i}:i\in I\}$ with bound $B.$
\end{definition}
%%%%%%%%%%%%%%%%%%%%%%%%%%%%%%%%%%%%%%%%%%%%%%%%%%%%%%%%%%%%%%%%%%%%%%%%%%%%%%%%%%%%%%%%%%%%%%%%%%%%%%%%%%%%%%%%%%%%%%
\begin{definition}
Let $\{Y_{i}\}_{i\in I}$ be a sequence of Banach spaces. We define
$$\left(\sum_{i\in I}\bigoplus Y_{i}\right)_{l_p}=\left\{\{x_i\}_{i\in I}|x_{i}\in Y_{i},
\sum_{i\in I}\|x_{i}\|^{p}<+\infty\right\}.$$ Therefore
$\left(\sum_{i\in I}\bigoplus Y_{i}\right)_{l_p}$ is a Banach space
with the norm
$$\|\{x_{i}\}_{i\in I}\|_{p}=\left(\sum_{i\in
I}\|x_{i}\|^{p}\right)^{\frac{1}{p}}.$$ Let $1<p,q<\infty$ be
conjugate exponents, i.e., $\frac{1}{p}+\frac{1}{q}=1$. If
$x^{*}=\{x_{i}^{*}\}_{i\in I}\in(\sum_{i\in I}\bigoplus
Y_{i}^{*})_{l_q}$, then one can show that the formula
$$\langle x,x^{*}\rangle=\sum_{i\in I}\langle
x_{i},x_{i}^{*}\rangle,\;\; x=\{x_{i}\}_{i\in I}\in(\sum_{i\in
I}\bigoplus Y_{i})_{l_p}$$ defines a continuous functional on
$(\sum_{i\in I}\bigoplus Y_{i})_{l_p},$ whose norm is equal to
$\|x^{*}\|_{q}.$
\end{definition}
%%%%%%%%%%%%%%%%%%%%%%%%%%%%%%%%%%%%%%%%%%%%%%%%%%%%%%%%%%%%%%%%%%%%%%%%%%%%%%%%%%%%%%%%%%%%%%%%%%%%%%%%%%%%%%%%%%%%%%
\begin{lemma}\cite{AB}
Let $1<p,q<\infty$ such that $\frac{1}{p}+\frac{1}{q}=1,$ then
$$\left(\sum_{i\in J}\bigoplus Y_{i}\right)_{l_p}^{*}=\left(\sum_{i\in J}\bigoplus
Y_{i}^{*}\right)_{l_q};$$ where the equality holds under the
duality
$$\langle x,x^{*}\rangle=\sum_{i\in J}\langle
x_{i},x_{i}^{*}\rangle.$$
\end{lemma}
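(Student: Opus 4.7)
The plan is to exhibit the natural evaluation map
\[
T:\left(\sum_{i\in J}\bigoplus Y_i^*\right)_{l_q}\longrightarrow \left(\sum_{i\in J}\bigoplus Y_i\right)_{l_p}^{*},\qquad T(x^*)(x)=\sum_{i\in J}\langle x_i,x_i^*\rangle,
\]
and to prove it is an isometric isomorphism. That $T(x^*)$ lies in the dual with $\|T(x^*)\|\le \|x^*\|_q$ is essentially the observation already made in the preceding definition: one applies H\"older's inequality to the scalar sequences $\{\|x_i\|\}$ and $\{\|x_i^*\|\}$ to get $|T(x^*)(x)|\le\sum_{i\in J}\|x_i\|\|x_i^*\|\le\|x\|_p\|x^*\|_q$, so $T$ is linear and $\|T\|\le 1$.

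For the reverse norm inequality the standard trick is a H\"older-optimal weighting. For each $i\in J$ and each $\eps>0$, choose $y_i\in Y_i$ with $\|y_i\|=1$ and $\langle y_i,x_i^*\rangle\ge(1-\eps)\|x_i^*\|$. For any finite $F\subset J$, define $x^{(F)}=\{x_i^{(F)}\}_{i\in J}$ by $x_i^{(F)}=\|x_i^*\|^{q-1}y_i$ for $i\in F$ and $x_i^{(F)}=0$ otherwise. Since $p(q-1)=q$, one computes $\|x^{(F)}\|_p^p=\sum_{i\in F}\|x_i^*\|^q$ and $T(x^*)(x^{(F)})\ge(1-\eps)\sum_{i\in F}\|x_i^*\|^q$. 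Dividing, exhausting $J$ by finite subsets, and sending $\eps\to 0$ yields $\|T(x^*)\|\ge\|x^*\|_q$, so $T$ is an isometry.

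For surjectivity, given $\phi\in\left(\sum\oplus Y_i\right)_{l_p}^{*}$, let $\iota_i:Y_i\to\sum\oplus Y_i$ be the canonical isometric inclusion and set $x_i^*:=\phi\circ\iota_i\in Y_i^*$. The main obstacle is verifying $\{x_i^*\}\in\left(\sum\oplus Y_i^*\right)_{l_q}$ with $\|\{x_i^*\}\|_q\le\|\phi\|$, since $\phi$ only provides the individual bounds $\|x_i^*\|\le\|\phi\|$ a priori. I would reuse the optimality construction above in reverse: forming the same test vector $x^{(F)}$ and evaluating $\phi$ on it gives $\sum_{i\in F}\|x_i^*\|^q\le(1-\eps)^{-1}\|\phi\|\,(\sum_{i\in F}\|x_i^*\|^q)^{1/p}$, hence $(\sum_{i\in F}\|x_i^*\|^q)^{1/q}\le(1-\eps)^{-1}\|\phi\|$ for every finite $F$, so the full $l_q$-sum is bounded by $\|\phi\|$. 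By construction $T(\{x_i^*\})$ agrees with $\phi$ on finitely supported sequences, and since $1<p<\infty$ these are dense in $\left(\sum\oplus Y_i\right)_{l_p}$, so the two functionals coincide. The delicate ingredient throughout is the weight $\|x_i^*\|^{q-1}$, which is precisely what couples the $l_q$-norm on the dual side to an admissible $l_p$-test vector.
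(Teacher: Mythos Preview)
The paper does not supply its own proof of this lemma: it is stated with a citation to \cite{AB} (Aliprantis--Border, \emph{Infinite Dimensional Analysis}) and used as a black box. So there is no in-paper argument to compare against.

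Your proof is correct and is exactly the standard one found in that reference (and in most functional analysis texts). The three pieces --- H\"older for the contraction $\|T\|\le 1$, the weighted test vector $x_i^{(F)}=\|x_i^*\|^{q-1}y_i$ with near-norming $y_i$ to get the reverse inequality, and the same construction applied to $x_i^*:=\phi\circ\iota_i$ together with density of finitely supported sequences for surjectivity --- are precisely the ingredients in the textbook proof. Two cosmetic points worth tightening if you write this up: when $x_i^*=0$ the choice of $y_i$ is irrelevant since the weight $\|x_i^*\|^{q-1}$ vanishes, and in the complex case one should multiply $y_i$ by a unimodular scalar to make $\langle y_i,x_i^*\rangle$ real and nonnegative before asserting $\langle y_i,x_i^*\rangle\ge(1-\eps)\|x_i^*\|$. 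Neither affects the argument.
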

%%%%%%%%%%%%%%%%%%%%%%%%%%%%%%%%%%%%%%%%%%%%%%%%%%%%%%%%%%%%%%%%%%%%%%%%%%%%%%%%%%%%%%%%%%%%%%%%%%%%%%%%%%%%%%%%%%%%%%
\begin{definition}\label{22}
Let $\mathbf{\Lambda}=\{\Lambda_{i}\in B(X,Y_{i}): i\in I\}$ be a
$pg$-Bessel sequence for $X$ with respect to $\{Y_{i}\}$. We define
the operators
\begin{equation}
U_{\Lambda}:X\rightarrow \left(\sum_{i\in I}\bigoplus
Y_{i}\right)_{l_p},\quad U_{\Lambda}(x)=\{\Lambda_{i}x\}_{i\in I}
\end{equation}
and
\begin{equation}\label{1}
T_{\Lambda}:\left(\sum_{i\in I}\bigoplus
Y_{i}^{*}\right)_{l_q}\rightarrow X^{*}\quad
T_{\Lambda}\{g_{i}\}_{i\in I}=\sum_{i\in I} \Lambda^{*}_{i}g_{i}.
\end{equation}

$U_{\Lambda}$ and $T_{\Lambda}$ are called the analysis and
synthesis operators of $\Lambda=\{\Lambda_{i}\}_{i\in I},$
respectively.
\end{definition}
%%%%%%%%%%%%%%%%%%%%%%%%%%%%%%%%%%%%%%%%%%%%%%%%%%%%%%%%%%%%%%%%%%%%%%%%%%%%%%%%%%%%%%%%%%%%%%%%%%%%%%%%%%%%%%%%%%%%%%
The following proposition, characterizes the $pg$-Bessel sequence by
the operator $T_{\Lambda}$ defined in (\ref{1}).
%%%%%%%%%%%%%%%%%%%%%%%%%%%%%%%%%%%%%%%%%%%%%%%%%%%%%%%%%%%%%%%%%%%%%%%%%%%%%%%%%%%%%%%%%%%%%%%%%%%%%%%%%%%%%%%%%%%%%%
\begin{proposition}\cite{AbFaRa}\label{Bessel}
$\mathbf{\Lambda}=\{\Lambda_{i}\in B(X,Y_{i}):i\in I\}$ is a
$pg$-Bessel sequence for $X$ with respect to $\{Y_{i}\}_{i\in I},$
if and only if the operator $T_{\Lambda}$ defined in $(\ref{1})$ is
a well defined and bounded operator. In this case, $\sum_{i\in I}
\Lambda^{*}_{i}g_{i}$ converges unconditionally for any
$\{g_i\}_{i\in I}\in \left(\sum_{i\in I}\bigoplus
Y_{i}^{*}\right)_{l_q}.$
\end{proposition}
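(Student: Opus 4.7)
The plan is to recognize that, after identifying $\bigl(\sum_{i\in I}\bigoplus Y_i\bigr)_{l_p}^{*}$ with $\bigl(\sum_{i\in I}\bigoplus Y_i^{*}\bigr)_{l_q}$ via the preceding lemma, the operators $U_\Lambda$ and $T_\Lambda$ are essentially adjoint to each other. The forward direction then reduces to bounding partial sums via H\"older, and the backward direction reduces to finite-dimensional $l_p$--$l_q$ duality plus Hahn--Banach.

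For the forward implication, assume the $pg$-Bessel bound $B$. This says precisely that $U_\Lambda$ is a bounded operator from $X$ into $\bigl(\sum_{i\in I}\bigoplus Y_i\bigr)_{l_p}$ with $\|U_\Lambda\|\le B$. For any finite $F\subseteq I$ and any $\{g_i\}_{i\in I}\in\bigl(\sum_{i\in I}\bigoplus Y_i^{*}\bigr)_{l_q}$, I compute
\[
\Bigl\|\sum_{i\in F}\Lambda_i^{*}g_i\Bigr\|_{X^{*}}
=\sup_{\|x\|_X\le 1}\Bigl|\sum_{i\in F}\langle g_i,\Lambda_i x\rangle\Bigr|
\le B\Bigl(\sum_{i\in F}\|g_i\|^{q}\Bigr)^{1/q},
\]
using H\"older on the inner sum and the Bessel bound on $\{\Lambda_i x\}$. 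Applying this with $F$ replaced by $F_2\setminus F_1$ and invoking $\sum_{i\in I}\|g_i\|^{q}<\infty$ gives a Cauchy net of partial sums over the directed set of finite subsets of $I$, which yields unconditional norm convergence of $\sum_{i\in I}\Lambda_i^{*}g_i$ in $X^{*}$ and simultaneously the bound $\|T_\Lambda\|\le B$.

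For the converse, assume $T_\Lambda$ is well defined and bounded. Fix $x\in X$ and a finite $F\subseteq I$. Using Hahn--Banach in each $Y_i$ to realise norms by functionals, together with the standard finite $l_p$--$l_q$ duality, one has the identity
\[
\Bigl(\sum_{i\in F}\|\Lambda_i x\|^{p}\Bigr)^{1/p}
=\sup\Bigl\{\Bigl|\sum_{i\in F}\langle g_i,\Lambda_i x\rangle\Bigr|:\; g_i\in Y_i^{*},\;\Bigl(\sum_{i\in F}\|g_i\|^{q}\Bigr)^{1/q}\le 1\Bigr\}.
\]
For any such $\{g_i\}_{i\in F}$, extend by zero to obtain an element of $\bigl(\sum_{i\in I}\bigoplus Y_i^{*}\bigr)_{l_q}$ of norm at most $1$; then $\bigl|\sum_{i\in F}\langle g_i,\Lambda_i x\rangle\bigr|=|\langle T_\Lambda\{g_i\},x\rangle|\le \|T_\Lambda\|\,\|x\|_X$. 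Taking the supremum on the right and then the supremum over finite $F\subseteq I$ produces the Bessel inequality with bound $\|T_\Lambda\|$.

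The main obstacle I expect is the converse direction, since one cannot simply invoke the duality lemma on $\bigl(\sum\bigoplus Y_i\bigr)_{l_p}$ to read off the Bessel bound from $\|T_\Lambda\|$; this would a priori only give an element of $\bigl(\sum\bigoplus Y_i^{**}\bigr)_{l_p}$. The finite-subset Hahn--Banach reduction above circumvents this by producing the bound level-by-level before passing to the limit, which is the cleanest way to avoid any reflexivity or canonical-embedding subtlety in the $Y_i$'s.
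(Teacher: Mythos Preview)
Your argument is correct in both directions. Note, however, that the paper does not supply its own proof of this proposition: it is quoted from \cite{AbFaRa} and stated without argument, so there is nothing here to compare against. The route you take---bounding finite partial sums via H\"older and the Bessel inequality to obtain unconditional convergence and $\|T_\Lambda\|\le B$ in the forward direction, and recovering the Bessel constant from $\|T_\Lambda\|$ via finite $l_p$--$l_q$ duality together with Hahn--Banach in the converse---is the standard one. Your decision to argue over finite subsets $F\subseteq I$ so as to avoid any reflexivity assumption on the $Y_i$ is appropriate, since no such assumption is in force in Section~2 of the paper.
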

%%%%%%%%%%%%%%%%%%%%%%%%%%%%%%%%%%%%%%%%%%%%%%%%%%%%%%%%%%%%%%%%%%%%%%%%%%%%%%%%%%%%%%%%%%%%%%%%%%%%%%%%%%%%%%%%%%%%%%
\begin{lemma}\cite{AbFaRa}\label{gf}
If $\mathbf{\Lambda}=\{\Lambda_{i}\in B(X,Y_{i}):i\in I\}$ is a
$pg$-Bessel sequence for $X$ with respect to $\{Y_{i}\}_{i\in I},$
then
\begin{enumerate}
\item[(i)] $U_{\Lambda}^{*}=T$,
 \item[(ii)]If $\mathbf{\Lambda}=\{\Lambda_{i}\in B(X,Y_{i}):i\in I\}$ is a $pg$-frame for $X$ and
  all of $Y_{i}$'s are reflexive, then $T_{\Lambda}^{*}=U_{\Lambda}.$
\end{enumerate}
\end{lemma}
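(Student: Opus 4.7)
For part (i), the plan is to use the duality between $\bigl(\sum_{i\in I}\bigoplus Y_i\bigr)_{l_p}$ and $\bigl(\sum_{i\in I}\bigoplus Y_i^*\bigr)_{l_q}$ recalled just above the statement, which lets me regard both $U_\Lambda^{*}$ and $T_\Lambda$ as operators from $\bigl(\sum_{i\in I}\bigoplus Y_i^*\bigr)_{l_q}$ into $X^{*}$. I pick arbitrary $x\in X$ and $\{g_i\}_{i\in I}\in \bigl(\sum_{i\in I}\bigoplus Y_i^*\bigr)_{l_q}$ and compute $\langle x, U_\Lambda^{*}\{g_i\}\rangle$ directly from the definition of the adjoint and of $U_\Lambda$; unwinding the pairing gives
\[
\langle x, U_\Lambda^{*}\{g_i\}\rangle = \langle U_\Lambda x, \{g_i\}\rangle = \sum_{i\in I}\langle \Lambda_i x, g_i\rangle = \sum_{i\in I}\langle x, \Lambda_i^{*} g_i\rangle = \langle x, T_\Lambda\{g_i\}\rangle,
\]
and pulling the sum through the evaluation at $x$ is justified by the unconditional convergence of $\sum_{i\in I}\Lambda_i^{*}g_i$ in $X^{*}$ furnished by Proposition~\ref{Bessel}. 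Since $x$ was arbitrary, this yields $U_\Lambda^{*}=T_\Lambda$.

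For part (ii), I repeat the same double-adjoint computation, but now invoke reflexivity to match the target spaces. Because every $Y_i$ is reflexive, applying the duality lemma with $Y_i^{*}$ in the role of $Y_i$ gives
\[
\Bigl(\sum_{i\in I}\bigoplus Y_i^{*}\Bigr)_{l_q}^{*} = \Bigl(\sum_{i\in I}\bigoplus Y_i^{**}\Bigr)_{l_p} = \Bigl(\sum_{i\in I}\bigoplus Y_i\Bigr)_{l_p},
\]
so $T_\Lambda^{*}$ has exactly the right codomain to be compared with $U_\Lambda$, and the pg-frame upper bound in (\ref{pgframe}) ensures that $U_\Lambda$ is itself a bounded operator into this $l_p$-sum. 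Writing $J_X\colon X\hookrightarrow X^{**}$ for the canonical embedding, I test against an arbitrary $\{g_i\}\in \bigl(\sum_{i\in I}\bigoplus Y_i^{*}\bigr)_{l_q}$:
\[
\langle T_\Lambda^{*} J_X(x),\{g_i\}\rangle = \langle J_X(x), T_\Lambda\{g_i\}\rangle = \Bigl(\sum_{i\in I}\Lambda_i^{*}g_i\Bigr)(x) = \sum_{i\in I} g_i(\Lambda_i x) = \langle U_\Lambda x,\{g_i\}\rangle,
\]
which, together with the identification of $X$ with its image in $X^{**}$, forces $T_\Lambda^{*}=U_\Lambda$.

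The main obstacle I foresee is not analytic but bookkeeping: at each step one has to be careful about which pairing is being used and which space is playing the role of dual, and in (ii) the reflexivity of the $Y_i$'s enters precisely to let $Y_i^{**}$ be replaced by $Y_i$ so that the codomain of $T_\Lambda^{*}$ genuinely coincides with that of $U_\Lambda$. Once these identifications are made explicit, both equalities collapse to the same tautological computation, with the interchange of summation and duality pairing warranted in both cases by Proposition~\ref{Bessel}.
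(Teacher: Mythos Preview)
The paper does not supply a proof of this lemma; it is quoted from \cite{AbFaRa}, so there is no in-paper argument to compare against. Your computation for part (i) is the standard one and is fine; the use of Proposition~\ref{Bessel} to justify moving the sum past the evaluation at $x$ is exactly right.

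For part (ii) there is one point you gloss over. The adjoint $T_\Lambda^{*}$ is, a priori, an operator from $X^{**}$ into $\bigl(\sum_{i\in I}\bigoplus Y_i^{*}\bigr)_{l_q}^{*}$, whereas $U_\Lambda$ has domain $X$. Your phrase ``identification of $X$ with its image in $X^{**}$'' only gives $T_\Lambda^{*}\circ J_X = U_\Lambda$; to get the literal equality $T_\Lambda^{*}=U_\Lambda$ asserted in the lemma you need $J_X$ to be surjective, i.e.\ $X$ reflexive. This is precisely where the full $pg$-frame hypothesis (not just $pg$-Bessel) enters: the lower inequality in (\ref{pgframe}) makes $U_\Lambda$ an isomorphism of $X$ onto a closed subspace of $\bigl(\sum_{i\in I}\bigoplus Y_i\bigr)_{l_p}$, and since the $Y_i$ are reflexive and $1<p<\infty$ this $l_p$-sum is reflexive, hence so is $X$. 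In your write-up you invoke only the upper $pg$-frame bound (which was already available from the $pg$-Bessel assumption) and never exploit the lower one, so as it stands the argument does not explain why the stronger hypothesis in (ii) is needed. Once you add this observation, the proof is complete.
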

%%%%%%%%%%%%%%%%%%%%%%%%%%%%%%%%%%%%%%%%%%%%%%%%%%%%%%%%%%%%%%%%%%%%%%%%%%%%%%%%%%%%%%%%%%%%%%%%%%%%%%%%%%%%%%%%%%%%%%
\begin{theorem}\cite{AbFaRa}\label{44}
$\mathbf{\Lambda}=\{\Lambda_{i}\in B(X,Y_{i}):i\in I\}$ is a
$pg$-frame for $X$ with respect to $\{Y_{i}\}_{i\in I}$ if and only
if $T_{\Lambda}$ defined in $(\ref{1})$ is a bounded and onto
operator.
\end{theorem}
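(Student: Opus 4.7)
The plan is to handle the two implications using different tools: the forward direction via the adjoint identity $T_\Lambda = U_\Lambda^*$ from Lemma \ref{gf}(i) combined with the classical duality between lower boundedness and adjoint surjectivity, and the reverse direction via the open mapping theorem followed by a Hahn--Banach/H\"older estimate.

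For the forward direction, suppose $\mathbf{\Lambda}$ is a $pg$-frame. The upper inequality in (\ref{pgframe}) immediately makes it a $pg$-Bessel sequence, so Proposition \ref{Bessel} gives that $T_\Lambda$ is well-defined and bounded. To prove surjectivity, I would rewrite the lower frame inequality as $\|U_\Lambda x\|_p \geq A\|x\|_X$, meaning $U_\Lambda$ is bounded below. Since $U_\Lambda^* = T_\Lambda$ by Lemma \ref{gf}(i), the classical consequence of the closed range theorem---that an operator is bounded below if and only if its adjoint is surjective---yields that $T_\Lambda$ is onto.

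For the reverse direction, assume $T_\Lambda$ is bounded and onto. Boundedness combined with Proposition \ref{Bessel} gives that $\mathbf{\Lambda}$ is a $pg$-Bessel sequence, which is exactly the upper frame inequality. For the lower bound, I plan to apply the open mapping theorem to $T_\Lambda$: there exists $M>0$ such that every $x^*\in X^*$ admits a preimage $g=\{g_i\}_{i\in I}$ under $T_\Lambda$ with $\|g\|_q\leq M\|x^*\|$. Fixing $x\in X$ and unfolding the duality pairing from Definition \ref{22},
\[
\langle x,x^*\rangle = \langle x,T_\Lambda g\rangle = \sum_{i\in I}\langle \Lambda_i x, g_i\rangle,
\]
H\"older's inequality yields $|\langle x,x^*\rangle|\leq \|g\|_q\bigl(\sum_{i\in I}\|\Lambda_i x\|^p\bigr)^{1/p}\leq M\|x^*\|\bigl(\sum_{i\in I}\|\Lambda_i x\|^p\bigr)^{1/p}$. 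Taking the supremum over $\|x^*\|\leq 1$ and invoking $\|x\|_X=\sup_{\|x^*\|\leq 1}|\langle x,x^*\rangle|$ (Hahn--Banach) gives the lower frame inequality with constant $1/M$.

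The main obstacle I anticipate is the backward direction, which requires simultaneously invoking the open mapping theorem (to extract a bounded preimage), the identification $\bigl(\sum_{i\in I}\bigoplus Y_i\bigr)_{l_p}^*=\bigl(\sum_{i\in I}\bigoplus Y_i^*\bigr)_{l_q}$ (to justify the H\"older step), and Hahn--Banach (to recover the norm), all while tracking that $(\Lambda_i^*g_i)(x)=g_i(\Lambda_i x)$. The forward direction is essentially automatic once the adjoint identity $U_\Lambda^*=T_\Lambda$ is in hand.
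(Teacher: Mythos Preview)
The paper does not actually supply a proof of this theorem: it is stated with the citation \cite{AbFaRa} and no proof environment follows, since it is imported as background from that reference. So there is no in-paper argument to compare against.

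That said, your proposal is mathematically sound and is essentially the standard proof one would expect in \cite{AbFaRa}. In the forward direction, boundedness below of $U_\Lambda$ indeed gives injectivity and closed range, whence the closed range theorem yields $\mathcal{R}(U_\Lambda^*)=(\ker U_\Lambda)^{\perp}=X^*$; combined with $U_\Lambda^*=T_\Lambda$ from Lemma~\ref{gf}(i), this gives surjectivity of $T_\Lambda$. In the reverse direction, your open-mapping/H\"older/Hahn--Banach chain is correct; the only point worth making explicit is the unwinding $\langle x,\Lambda_i^*g_i\rangle=\langle \Lambda_i x,g_i\rangle$, which you already flag. Nothing is missing.
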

%%%%%%%%%%%%%%%%%%%%%%%%%%%%%%%%%%%%%%%%%%%%%%%%%%%%%%%%%%%%%%%%%%%%%%%%%%%%%%%%%%%%%%%%%%%%%%%%%%%%%%%%%%%%%%%%%%%%%%
\begin{definition}\label{qg}
Let $1<q<\infty.$ A family $\mathbf{\Lambda}=\{\Lambda_{i}\in
B(X,Y_{i}):i\in I\}$ is called a $qg-$Riesz basis for $X^{*}$ with
respect to $\{Y_{i}\}_{i\in I}$, if
\begin{enumerate}
\item[(i)] $\{f:\Lambda_{i}f=0,i\in I\}=\{0\}$ (i.e.,
$\{\Lambda_{i}\}_{i\in J}$ is $g$-complete);
\item[(ii)] There are positive
constants $A,B$ such that for any finite subset $I_{1}\subseteq I$
$$A\left(\sum_{i\in I_1}\|g_{i}\|^{q}\right)^{\frac{1}{q}}\leq \|\sum_{i\in I_{1}}\Lambda^{*}_{i}g_{i}
\|\leq B\left(\sum_{i\in
I_{1}}\|g_{i}\|^{q}\right)^\frac{1}{q},\quad g_{i}\in Y^{*}_{i}.$$
\end{enumerate}
\end{definition}
%%%%%%%%%%%%%%%%%%%%%%%%%%%%%%%%%%%%%%%%%%%%%%%%%%%%%%%%%%%%%%%%%%%%%%%%%%%%%%%%%%%%%%%%%%%%%%%%%%%%%%%%%%%%%%%%%%%%%%
The assumptions of the definition (\ref{qg}) imply that $\sum_{i\in
J}\Lambda^{*}_{i}g_{i}$ converges unconditionally for all
$\{g_{i}\}_{i\in I}\in (\sum _{i\in I}\bigoplus Y^{*}_{i})_{l_{q}},$
 and
$$A\left(\sum_{i\in I}\|g_{i}\|^{q}\right)^{\frac{1}{q}}\leq \|\sum_{i\in I}\Lambda^{*}_{i}g_{i}
\|\leq B\left(\sum_{i\in I}\|g_{i}\|^{q}\right)^\frac{1}{q}.$$ In
\cite{AbFaRa}, it is proved that if
$\mathbf{\Lambda}=\{\Lambda_{i}\in B(X,Y_{i}):i\in I\}$ is a
$qg$-Riesz basis for $X^{*}$ with respect to $\{Y_{i}\}_{i\in I},$
then $\mathbf{\Lambda}$ is a $pg$-frame for $X$ with respect to
$\{Y_{i}\}_{i\in I}.$ Therefore $\mathbf{\Lambda}=\{\Lambda_{i}\in
B(X,Y_{i}):i\in I\}$ is a $qg$-Riesz basis for $X^*$ if and only if
the operator $T_{\Lambda}$ defined in (\ref{1}) is an invertible
operator from $\left(\sum_{i\in J}\bigoplus Y_{i}^{*}\right)_{l_q}$
onto $X^{*}$.
%%%%%%%%%%%%%%%%%%%%%%%%%%%%%%%%%%%%%%%%%%%%%%%%%%%%%%%%%%%%%%%%%%%%%%%%%%%%%%%%%%%%%%%%%%%%%%%%%%%%%%%%%%%%%%%%%%%%%%
\begin{theorem}\cite{AbFaRa}\label{qs}
Let $\{Y_{i}\}_{i\in I}$ be a sequence of reflexive Banach spaces.
Let $\mathbf{\Lambda}=\{\Lambda_{i}\in B(X,Y_{i}):i\in I\}$ be a
$pg$-frame for $X$ with  respect to $\{Y_{i}\}_{i\in I}$. Then the
following statements are equivalent:
\begin{enumerate}
\item[(i)] $\{\Lambda_{i}\}_{i\in I}$ is a $qg$-Riesz basis for
$X^{*};$
 \item[(ii)] If $\{g_{i}\}_{i\in I}\in\left(\sum_{i\in I}\bigoplus
Y_{i}^{*}\right)_{l_{q}}$ and $\sum_{i\in I} \Lambda^{*}_{i}g_{i}=0$
then $g_{i}=0;$
 \item[(iii)] $\mathcal{R}_{U}=(\sum_{i\in I}\bigoplus
Y_{i})_{l_{p}}.$
\end{enumerate}
\end{theorem}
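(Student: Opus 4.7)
The plan is to route everything through the synthesis operator $T_{\Lambda}$ and its adjoint relationship with the analysis operator $U_{\Lambda}$. Two previously stated facts will do almost all of the work: Theorem \ref{44} says that $\mathbf{\Lambda}$ being a $pg$-frame is equivalent to $T_{\Lambda}$ being bounded and onto $X^{*}$, and the paragraph immediately preceding the present theorem observes that $\mathbf{\Lambda}$ is a $qg$-Riesz basis for $X^{*}$ if and only if $T_{\Lambda}$ is invertible from $(\sum_{i\in I}\bigoplus Y_{i}^{*})_{l_{q}}$ onto $X^{*}$. Lemma \ref{gf} then links the two operators via $U_{\Lambda}^{*}=T_{\Lambda}$ and, under reflexivity of the $Y_{i}$, also via $T_{\Lambda}^{*}=U_{\Lambda}$.

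For (i) $\Rightarrow$ (ii), invertibility of $T_{\Lambda}$ yields injectivity, so $\sum_{i\in I}\Lambda_{i}^{*}g_{i}=T_{\Lambda}\{g_{i}\}=0$ forces $\{g_{i}\}=0$. For (ii) $\Rightarrow$ (iii), Theorem \ref{44} gives that $T_{\Lambda}$ is bounded and surjective, and (ii) upgrades this to bijectivity, so the open mapping theorem makes $T_{\Lambda}$ invertible. Reflexivity of each $Y_{i}$ together with Lemma \ref{gf}(ii) then gives $T_{\Lambda}^{*}=U_{\Lambda}$; the adjoint of an invertible bounded operator between Banach spaces is invertible, so $U_{\Lambda}$ is invertible and in particular surjective, which is exactly $\mathcal{R}_{U}=(\sum_{i\in I}\bigoplus Y_{i})_{l_{p}}$.

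For (iii) $\Rightarrow$ (i), the lower $pg$-frame bound shows $U_{\Lambda}$ to be bounded below and hence injective; together with (iii) this makes $U_{\Lambda}$ a bounded bijection, so it is invertible by the open mapping theorem. By Lemma \ref{gf}(i), $T_{\Lambda}=U_{\Lambda}^{*}$ is then invertible, and the characterization recalled above identifies $\mathbf{\Lambda}$ as a $qg$-Riesz basis for $X^{*}$. I do not expect a serious obstacle in any step; the only delicate point is the use of reflexivity of each $Y_{i}$ in the proof of (ii) $\Rightarrow$ (iii), where it is needed in order to identify $T_{\Lambda}^{*}$ as an operator into $(\sum_{i\in I}\bigoplus Y_{i})_{l_{p}}$ via the natural dual pairing, rather than into a double dual, so that the identification $T_{\Lambda}^{*}=U_{\Lambda}$ from Lemma \ref{gf}(ii) can actually be invoked.
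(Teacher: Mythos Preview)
The theorem you are asked to prove is not proved in the present paper: it is quoted from \cite{AbFaRa} (note the citation attached to the theorem heading and the opening sentence of Section~2, which says these results are recalled from that reference), so there is no in-paper proof to compare your attempt against.

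That said, your argument is correct and is the natural one given the tools already assembled in Section~2. The cycle (i) $\Rightarrow$ (ii) $\Rightarrow$ (iii) $\Rightarrow$ (i) routes cleanly through invertibility of $T_{\Lambda}$ and $U_{\Lambda}$, using Theorem~\ref{44} for surjectivity of $T_{\Lambda}$, Lemma~\ref{gf} for the adjoint identifications $U_{\Lambda}^{*}=T_{\Lambda}$ and (under reflexivity) $T_{\Lambda}^{*}=U_{\Lambda}$, the open mapping theorem to pass from bounded bijection to invertibility, and the lower $pg$-frame inequality for injectivity of $U_{\Lambda}$. Your remark that reflexivity of the $Y_{i}$ is exactly what allows the identification $T_{\Lambda}^{*}=U_{\Lambda}$ (rather than landing in a double dual) is the only genuinely delicate point, and you handle it correctly.
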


%%%%%%%%%%%%%%%%%%%%%%%%%%%%%%%%%%%%%%%%%%%%%%%%%%%%%%%%%%%%%%%%%%%%%%%%%%%%%%%%%%%%%%%%%%%%%%%%%%%%%%%%%%%%%%%%%%%%%%%%%%%
%%%%%%%%%%%%%%%%%%%%%%%%%%%%%%%%%%%%%%%%%%%%%%%%%%%%%%%%%%%%%%%%%%%%%%%%%%%%%%%%%%%%%%%%%%%%%%%%%%%%%%%%%%%%%%%%%%%%%%%%%%%
\section{Multipliers for pg-Bessel sequences}
In this section, we assume that $X_1$ and $X_2$ are reflexive Banach
spaces and $\{Y_i\}_{i=1}^{\infty}$ is a family of reflexive
Banach spaces. Also, we consider $p,q>1$ are real numbers such that
$\frac{1}{p}+\frac{1}{q}=1.$
%%%%%%%%%%%%%%%%%%%%%%%%%%%%%%%%%%%%%%%%%%%%%%%%%%%%%%%%%%%%%%%%%%%%%%%%%%%%%%%%%%%%%%%%%%%%%%%%%%%%%%%%%%%%%%%%%%%%%%%%%%%
\begin{proposition}\label{lamdaL}
Let $X$ be a Banach space and let $\mathbf{\Lambda}=\{\Lambda_i\in B(X,Y_i)\}_{i=1}^{\infty}$ be a
$pg$-Bessel sequence for $X$ with respect to
$\{Y_{i}\}_{i=1}^{\infty}$ with the bound $B.$
\begin{enumerate}
\item[(1)] If $\mathbf{\Theta}=\{\Theta_{i}\in B(X,Y_i)\}_{i=1}^{\infty}$ is a sequence of bounded operators such that
$\left(\sum_{i=1}^{\infty}\|\Lambda_{i}-\Theta_{i}\|^p\right)^{\frac{1}{p}}<K<\infty,$
then $\Theta$ is a $pg$-Bessel sequence for
$X$ with  bound $B+K.$
\item[(2)] Let $\mathbf{\Theta^{(n)}}=\{\Theta^{(n)}_{i}\in B(X,Y_i)\}_{i=1}^{\infty}$ be a sequence of
bounded operators such that for all $\varepsilon> 0$ there exists
$N>0$ with
$$\left(\sum_{i=1}^{\infty}\|\Lambda_{i}-\Theta^{(n)}_{i}\|^p\right)^{\frac{1}{p}}<\varepsilon,\qquad n\geq N,$$
then $\mathbf{\Theta^{(n)}}$ is a $pg$-Bessel sequence and for all
$n\geq N,$
$$\|U_{\Theta^{(n)}}-U_{\Lambda}\|\leq\varepsilon,\qquad \|T_{\Theta^{(n)}}-T_{\Lambda}\|\leq\varepsilon.$$
\end{enumerate}
\end{proposition}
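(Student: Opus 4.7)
The plan is to handle both items by reducing everything to Minkowski's inequality in $\ell^p$ together with the correspondence $U_{\Lambda}^{*}=T_{\Lambda}$ provided by Lemma \ref{gf}(i).

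For part (1), I would start from the bound we want to prove and split $\Theta_i = \Lambda_i + (\Theta_i - \Lambda_i)$. Minkowski's inequality gives
$$\left(\sum_{i=1}^{\infty}\|\Theta_{i}x\|^{p}\right)^{1/p} \le \left(\sum_{i=1}^{\infty}\|\Lambda_{i}x\|^{p}\right)^{1/p} + \left(\sum_{i=1}^{\infty}\|(\Theta_{i}-\Lambda_{i})x\|^{p}\right)^{1/p}.$$
The first term is bounded by $B\|x\|_{X}$ because $\mathbf{\Lambda}$ is a $pg$-Bessel sequence with bound $B$. For the second term, the operator-norm estimate $\|(\Theta_{i}-\Lambda_{i})x\|\le\|\Lambda_{i}-\Theta_{i}\|\,\|x\|_{X}$, together with the hypothesis, yields a bound of $K\|x\|_{X}$. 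Adding the two contributions gives the bound $B+K$.

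For part (2), applying part (1) with the uniform constant $\varepsilon$ in place of $K$ immediately shows that each $\mathbf{\Theta^{(n)}}$, for $n\ge N$, is a $pg$-Bessel sequence; in particular the operators $U_{\Theta^{(n)}}$ and $T_{\Theta^{(n)}}$ are well defined. Since the analysis operator is linear in the family, $U_{\Theta^{(n)}}(x)-U_{\Lambda}(x)=\{(\Theta^{(n)}_{i}-\Lambda_{i})x\}_{i}$, and repeating the second-term estimate from part (1) gives
$$\|U_{\Theta^{(n)}}(x)-U_{\Lambda}(x)\|_{p} \le \|x\|_{X}\left(\sum_{i=1}^{\infty}\|\Lambda_{i}-\Theta^{(n)}_{i}\|^{p}\right)^{1/p} \le \varepsilon\|x\|_{X},$$
which is the first of the two inequalities.

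For the synthesis operators, I would observe that the family $\{\Lambda_{i}-\Theta^{(n)}_{i}\}_{i}$ is itself a $pg$-Bessel sequence with bound at most $\varepsilon$, by the inequality just displayed. Hence Lemma \ref{gf}(i) applies to it, giving that its synthesis operator $T_{\Lambda-\Theta^{(n)}}=T_{\Lambda}-T_{\Theta^{(n)}}$ is the Banach-space adjoint of its analysis operator $U_{\Lambda-\Theta^{(n)}}=U_{\Lambda}-U_{\Theta^{(n)}}$. Since adjoints preserve the operator norm, $\|T_{\Lambda}-T_{\Theta^{(n)}}\|=\|U_{\Lambda}-U_{\Theta^{(n)}}\|\le\varepsilon$, which finishes the proof. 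The only point needing mild care is the identification of the differences of analysis and synthesis operators with those associated to the pointwise-difference family, which is immediate from the definitions in (\ref{1}) and the unconditional convergence guaranteed by Proposition \ref{Bessel}; no serious obstacle arises.
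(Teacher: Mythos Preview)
Your argument is correct, but the route differs from the paper's in a way worth noting. For part~(1) the paper works on the synthesis side: it estimates
\[
\Bigl\|\sum_{i}(\Lambda_i^*-\Theta_i^*)g_i\Bigr\|
\le\Bigl(\sum_i\|g_i\|^q\Bigr)^{1/q}\sup_{\|f\|\le1}\Bigl(\sum_i\|\Lambda_if-\Theta_if\|^p\Bigr)^{1/p}
\le K\|\{g_i\}\|_q
\]
via H\"older, concludes $\|T_\Theta\|\le B+K$, and then invokes Proposition~\ref{Bessel} to obtain the $pg$-Bessel property. You instead stay on the analysis side and apply Minkowski directly to the defining inequality, which is more elementary and avoids Proposition~\ref{Bessel} altogether. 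For part~(2) the paper reuses the synthesis-side computation from~(1) to get $\|T_{\Theta^{(n)}}-T_\Lambda\|\le\varepsilon$ and then proves the $U$-estimate separately; you reverse the order, proving the $U$-estimate first and then deducing the $T$-estimate from $T=U^{*}$ (Lemma~\ref{gf}(i)) and the equality of an operator's norm with that of its adjoint. Both organizations are valid; the paper's has the minor advantage that the $T$-bound is obtained as a byproduct of the proof of~(1), whereas your adjoint trick buys a cleaner logical dependence (everything flows from a single Minkowski estimate) at the cost of invoking Lemma~\ref{gf}(i).
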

%%%%%%%%%%%%%%%%%%%%%%%%%%%%%%%%%%%%%%%%%%%%%%%%%%%%%%%%%%%%%%%%%%%%%%%%%%%%%%%%%%%%%%%%%%%%%%%%%%%%%%%%%%%%%%%%%%%%%%%%%
\begin{proof}
(1) It is easy to show that $\sum_{i=1}^{\infty}\Theta_{i}^{*}g_i$
converges for any $\{g_i \}_{i=1}^{\infty}\in
\left(\sum_{i=1}^{\infty}\bigoplus Y_{i}^{*}\right)_{l_q}.$
Therefore, if $\{g_i \}_{i=1}^{\infty}\in
\left(\sum_{i=1}^{\infty}\bigoplus Y_{i}^{*}\right)_{l_q}$ we have
\begin{equation*}
\begin{aligned}
\|T_{\Lambda}\{g_i \}_{i=1}^{\infty}-T_{\Theta}\{g_i
\}_{i=1}^{\infty}\|=&\|\sum_{i=1}^{\infty}(\Lambda^{*}_i-\Theta^{*}_i
)g_i\|
={\sup}_{\|f\|\leq 1}\|\sum_{i=1}^{\infty}g_i(\Lambda_i f-\Theta_i
f )\|
\\
\leq&{\sup}_{\|f\|\leq 1}\sum_{i=1}^{\infty}\|g_i\|\|\Lambda_i
f-\Theta_i f \|
\\
\leq&\left(\sum_{i=1}^{\infty}\|g_{i}\|^q\right)^{\frac{1}{q}}{\sup}_{\|f\|\leq
1}\left(\sum_{i=1}^{\infty}\|
\Lambda_{i}f-\Theta_{i}f\|^p\right)^{\frac{1}{p}}
\\
\leq& K\|\{g_i \}_{i=1}^{\infty}\|_q,
\end{aligned}
\end{equation*}
and so
\begin{equation*}
\begin{aligned}
\|T_{\Theta}\{g_i \}_{i=1}^{\infty}\|\leq&\|T_{\Theta}\{g_i
\}_{i=1}^{\infty}-T_{\Lambda}\{g_i \}_{i=1}^{\infty}\|
+\|T_{\Lambda}\{g_i \}_{i=1}^{\infty}\|
\\
\leq& (B+K)\|\{g_i \} _{i=1}^{\infty}\|_q.
\end{aligned}
\end{equation*}
Consequently, Proposition \ref{Bessel} implies that $\{\Theta_i
\}_{i=1}^{\infty}$ is a $pg$-Bessel
sequence with the bound $B+K.$ \\
(2) It follows from (1) that $\{\Theta^{(n)}_i \}_{i=1}^{\infty}$ is
a $pg$-Bessel sequence  and
$\|T_{\Theta^{(n)}}-T_{\Lambda}\|\leq\varepsilon$ for all $n\geq N$.
But for $f\in X$ and $n\geq N$ we have
$$\|U_{\Lambda}f-U_{\Theta^{(n)}}f\|_{p}=\left(\sum_{i=1}^{\infty}\|\Lambda_{i}f-\Theta^{(n)}_{i}f\|^p\right)^{\frac{1}
{p}}\leq
 \left(\sum_{i=1}^{\infty}\|\Lambda_{i}-\Theta^{(n)}_{i}\|^p\right)^{\frac{1}{p}}\|f\|$$ hence $\|U_{\Theta^{(n)}}-U_
 {\Lambda}
 \|\leq\varepsilon.$
\end{proof}
%%%%%%%%%%%%%%%%%%%%%%%%%%%%%%%%%%%%%%%%%%%%%%%%%%%%%%%%%%%%%%%%%%%%%%%%%%%%%%%%%%%%%%%%%%%%%%%%%%%%%%%%%%%%%%%%%%%%%%%%%
\begin{proposition}\label{mul}
Let $\mathbf{\Lambda}=\{\Lambda_{i}\in
B(X_2,Y_{i})\}_{i=1}^{\infty}$ be a $pg$-Bessel sequence for $X_2$
with bound $B_{\Lambda}$ and $\mathbf{\Theta}=\{\Theta_{i}\in
B(X_1^*,Y_{i}^{*})\}_{i=1}^{\infty}$ be a $qg$-Bessel sequence for
$X_1^*$ with bound $B_{\Theta}$. If $m\in l^{\infty},$ then the
operator
$$\mathbf{M}_{m,\Lambda,\Theta}:X_1^*\rightarrow
X_2^*,\qquad\mathbf{M}
_{m,\Lambda,\Theta}(g)=\sum_{i=1}^{\infty}m_i\Lambda_i^*\Theta_i g$$
is well defined, the sum converges unconditionally for all $g\in
X_{1}^{*}$ and $$\|\mathbf{M}_{m,\Lambda,\Theta}\|\leq
B_{\Lambda}B_{\Theta}\|m\|_{\infty}.$$
\end{proposition}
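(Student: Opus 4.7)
The plan is to factor the operator $\mathbf{M}_{m,\Lambda,\Theta}$ through the sequence space $\left(\sum_{i=1}^\infty \bigoplus Y_i^*\right)_{l_q}$ as the composition of (i) the map sending $g \in X_1^*$ to the weighted analysis sequence $\{m_i\Theta_i g\}$ in that $l_q$-sum, followed by (ii) the synthesis operator $T_\Lambda$ of $\mathbf{\Lambda}$. Each of these two steps is controlled by a separate hypothesis, and the conclusion then follows by juxtaposition.

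First I would check that, for any fixed $g\in X_1^*$, the sequence $\{m_i\Theta_i g\}_{i=1}^\infty$ belongs to $\left(\sum_i \bigoplus Y_i^*\right)_{l_q}$. Since $\mathbf{\Theta}$ is a $qg$-Bessel sequence for $X_1^*$ with bound $B_\Theta$, one has $\left(\sum_i \|\Theta_i g\|^q\right)^{1/q}\leq B_\Theta\|g\|$, and multiplying pointwise by the bounded symbol $m\in l^\infty$ only scales this estimate, yielding $\|\{m_i\Theta_i g\}\|_q\leq \|m\|_\infty B_\Theta\|g\|$.

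Next I would invoke Proposition~\ref{Bessel} applied to the $pg$-Bessel sequence $\mathbf{\Lambda}$ for $X_2$: the synthesis operator $T_\Lambda:\left(\sum_i \bigoplus Y_i^*\right)_{l_q}\to X_2^*$ given by $T_\Lambda\{h_i\}=\sum_i \Lambda_i^* h_i$ is well defined, bounded, and the series converges unconditionally for every input sequence. Bounded\-ness gives $\|T_\Lambda\|\leq B_\Lambda$ (via $T_\Lambda = U_\Lambda^*$ from Lemma~\ref{gf}(i) and the Bessel bound $\|U_\Lambda\|\leq B_\Lambda$). Applying $T_\Lambda$ to $\{m_i\Theta_i g\}$ yields exactly $\sum_i m_i\Lambda_i^*\Theta_i g = \mathbf{M}_{m,\Lambda,\Theta}(g)$, so the series defining $\mathbf{M}_{m,\Lambda,\Theta}(g)$ converges unconditionally in $X_2^*$.

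Combining the two estimates gives
\[
\|\mathbf{M}_{m,\Lambda,\Theta}(g)\| = \|T_\Lambda\{m_i\Theta_i g\}\| \leq B_\Lambda\, \|\{m_i\Theta_i g\}\|_q \leq B_\Lambda B_\Theta \|m\|_\infty \|g\|,
\]
which is the desired norm bound. There is no serious obstacle here; the only thing to be careful about is matching the dualities correctly, namely confirming that the target of the analysis of $\mathbf{\Theta}$ (a sequence of elements of $Y_i^*$) is precisely the domain of the synthesis operator $T_\Lambda$ of $\mathbf{\Lambda}$, which it is because $\Lambda_i\in B(X_2,Y_i)$ forces $T_\Lambda$ to act on $l_q$-sums of $Y_i^*$.
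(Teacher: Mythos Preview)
Your proposal is correct and follows essentially the same approach as the paper: both observe that $\{m_i\Theta_i g\}\in\left(\sum_i\bigoplus Y_i^*\right)_{l_q}$ and then invoke Proposition~\ref{Bessel} to get unconditional convergence via $T_\Lambda$. The only cosmetic difference is that the paper obtains the norm bound by expanding $\|\sum_i m_i\Lambda_i^*\Theta_i g\|$ via duality and H\"older directly, whereas you package the same computation as $\|T_\Lambda\|\leq B_\Lambda$ using Lemma~\ref{gf}(i); the content is identical.
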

%%%%%%%%%%%%%%%%%%%%%%%%%%%%%%%%%%%%%%%%%%%%%%%%%%%%%%%%%%%%%%%%%%%%%%%%%%%%%%%%%%%%%%%%%%%%%%%%%%%%%%%%%%%%%%%%%%%%%%%%%
\begin{proof}
Let $g\in X_{1}^{*}$, then $\{m_i \Theta_i g\}_{i=1}^{\infty}\in
\left(\sum_{i=1}^{\infty}\bigoplus Y_{i}^{*}\right)_ {l_q},$ and
Proposition \ref{Bessel} implies that
$\sum_{i=1}^{\infty}m_i\Lambda_i^*\Theta_i g$ converges
unconditionally and $\mathbf{M}_{m,\Lambda,\Theta}$ is well defined. Also we have
\begin{equation*}
\begin{aligned}
\|\sum_{i=1}^{\infty}m_i\Lambda_i^*\Theta_i
g\|=&{\sup}_{\|x\|\leq1}|\langle x,\sum_{i=1}^{\infty}m_i\Lambda_i^*\Theta_i
g\rangle|
={\sup}_{\|x\|\leq1}|\sum_{i=1}^{\infty}m_i(\Theta_i g)(\Lambda_i x)|
\\
\leq&{\sup}_{\|x\|\leq1}\sum_{i=1}^{\infty}|m_i||(\Theta_i
g)(\Lambda_i x)|
\leq\|m\|_{\infty}{\sup}_{\|x\|\leq1}\sum_{i=1}^{\infty}\|\Theta_i
g\|\|\Lambda_i x\|
\\
\leq&\|m\|_{\infty}\left(\sum_{i=1}^{\infty}\|\Theta_{i}g\|^q\right)^{\frac{1}{q}}{\sup}_{\|x\|\leq
1}\left(\sum_ {i=1}^{\infty}\|\Lambda_{i}x\|^p\right)^{\frac{1}{p}}
\\
\leq&\|m\|_{\infty}\cdot B_{\Theta}\|g\|\cdot{\sup}_{\|x\|\leq
1}(B_{\Lambda}\|x\|)
\\
\leq&\|m\|_{\infty}\cdot B_{\Theta}\cdot B_{\Lambda}\|g\|.
\end{aligned}
\end{equation*}
Therefore $\mathbf{M}_{m,\Lambda,\Theta}$ is bounded and $\|\mathbf{M}_{m,\Lambda,\Theta}\|\leq B_{\Lambda}
B_{\Theta}\|m\|_{\infty}$.
\end{proof}
%%%%%%%%%%%%%%%%%%%%%%%%%%%%%%%%%%%%%%%%%%%%%%%%%%%%%%%%%%%%%%%%%%%%%%%%%%%%%%%%%%%%%%%%%%%%%%%%%%%%%%%%%%%%%%%%%%%%%%%%%
\begin{definition}
Let $\mathbf{\Lambda}=\{\Lambda_{i}\in
B(X_2,Y_{i})\}_{i=1}^{\infty}$ be a $pg$-Bessel sequence for $X_2$
with bound $B_{\Lambda}$ and $\mathbf{\Theta}=\{\Theta_{i}\in
B(X_1^*,Y_{i}^{*})\}_{i=1}^{\infty}$ be a $qg$-Bessel sequence for
$X_1^*$ with bound $B_{\Theta}$. Let $m=\{m_i\}_{i=1}^{\infty}\in
l^{\infty}.$ The operator
\begin{equation}
\mathbf{M}_{m,\Lambda,\Theta}:X_1^*\rightarrow X_2^*,\qquad\mathbf{M}_{m,\Lambda,\Theta}(g)=\sum_{i=1}^{\infty}m_i
\Lambda_i^*\Theta_i g
\end{equation}
is called the $(p,q)g-$Bessel multiplier of
$\mathbf{\Lambda}$, $ \mathbf{\Theta}$ and $m$. The sequence $m$ is called the symbol of $ \mathbf{M}.$
\end{definition}
%%%%%%%%%%%%%%%%%%%%%%%%%%%%%%%%%%%%%%%%%%%%%%%%%%%%%%%%%%%%%%%%%%%%%%%%%%%%%%%%%%%%%%%%%%%%%%%%%%%%%%%%%%%%%%%%%%%%%%%%%
\begin{proposition}
Let $\mathbf{\Lambda}=\{\Lambda_{i}\in
B(X_2,Y_{i})\}_{i=1}^{\infty}$ be a $qg$-Riesz basis for $X^*_2$ and
$\mathbf{\Theta}=\{\Theta_{i}\in
B(X_1^*,Y_{i}^{*})\}_{i=1}^{\infty}$ be a $qg$-Bessel sequence for
$X_1^*$ with non zero members. Then the mapping
$$
m\rightarrow \mathbf{M}_{m,\Lambda,\Theta}
$$
is injective from $l^{\infty}$ into $B(X^*_1, X^*_2)$.
\end{proposition}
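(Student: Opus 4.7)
The plan is to establish injectivity by showing that the kernel of the linear map $m \mapsto \mathbf{M}_{m,\Lambda,\Theta}$ is trivial. So I would fix $m \in l^\infty$ with $\mathbf{M}_{m,\Lambda,\Theta} = 0$ and aim to conclude that $m_i = 0$ for every $i$.

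The first step is to pick an arbitrary $g \in X_1^*$. Since $\mathbf{\Theta}$ is a $qg$-Bessel sequence for $X_1^*$ with bound $B_{\Theta}$ and $m \in l^\infty$, the estimate
\[
\sum_{i=1}^{\infty} \|m_i \Theta_i g\|^q \leq \|m\|_{\infty}^q \sum_{i=1}^{\infty} \|\Theta_i g\|^q \leq \|m\|_{\infty}^q B_{\Theta}^q \|g\|^q
\]
shows that $\{m_i \Theta_i g\}_{i=1}^{\infty} \in \left(\sum_{i=1}^{\infty} \bigoplus Y_i^*\right)_{l_q}$. The vanishing assumption $\mathbf{M}_{m,\Lambda,\Theta}(g) = 0$ then reads precisely as
\[
T_{\Lambda}\bigl(\{m_i \Theta_i g\}_{i=1}^{\infty}\bigr) = \sum_{i=1}^{\infty} \Lambda_i^* (m_i \Theta_i g) = 0,
\]
where $T_{\Lambda}$ is the synthesis operator of $\mathbf{\Lambda}$ introduced in Definition \ref{22}.

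Next, I would invoke the characterization recalled just after Definition \ref{qg}: since $\mathbf{\Lambda}$ is a $qg$-Riesz basis for $X_2^*$, the operator $T_{\Lambda}$ is invertible from $\left(\sum_{i=1}^{\infty}\bigoplus Y_i^*\right)_{l_q}$ onto $X_2^*$, hence injective. Therefore $m_i \Theta_i g = 0$ in $Y_i^*$ for every $i$. Because $g \in X_1^*$ was arbitrary, this says $m_i \Theta_i = 0$ as an operator in $B(X_1^*, Y_i^*)$ for every $i$, and the hypothesis that each $\Theta_i$ is non-zero finally forces $m_i = 0$.

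The argument is short and I do not expect a real obstacle. The only point that needs a brief verification is that $\{m_i \Theta_i g\}_i$ genuinely belongs to the $l_q$-direct sum, so that the injectivity of $T_{\Lambda}$ can be applied; but this is immediate from the $qg$-Bessel bound on $\mathbf{\Theta}$ combined with $m \in l^\infty$, as displayed above.
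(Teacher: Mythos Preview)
Your proof is correct and follows essentially the same approach as the paper: both fix $g\in X_1^*$, use the $qg$-Riesz basis property of $\mathbf{\Lambda}$ to deduce that each coefficient $m_i\Theta_i g$ vanishes (the paper cites Theorem~\ref{qs}(ii), you cite the equivalent invertibility of $T_\Lambda$ noted after Definition~\ref{qg}), and then conclude $m_i=0$ from $\Theta_i\neq 0$. Your version is slightly more explicit in verifying that $\{m_i\Theta_i g\}_i$ lies in the $l_q$-direct sum, which the paper leaves implicit.
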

%%%%%%%%%%%%%%%%%%%%%%%%%%%%%%%%%%%%%%%%%%%%%%%%%%%%%%%%%%%%%%%%%%%%%%%%%%%%%%%%%%%%%%%%%%%%%%%%%%%%%%%%%%%%%%%%%%%%%%%%%
\begin{proof}
If $\mathbf{M}_{m,\Lambda,\Theta}=0$,  then
$\sum_{i=1}^{\infty}m_i\Lambda_i^*\Theta_i g=0$ for all $g\in
X_1^*$. Then Theorem \ref{qs} implies that $m_i\Theta_i g=0$ for all
$i\in\mathbb{N}$ and for all $g\in X_1^*$. Since $\Theta_i \neq 0$
for each
 $i\in\mathbb{N},$ we get $m_i=0.$
\end{proof}
%%%%%%%%%%%%%%%%%%%%%%%%%%%%%%%%%%%%%%%%%%%%%%%%%%%%%%%%%%%%%%%%%%%%%%%%%%%%%%%%%%%%%%%%%%%%%%%%%%%%%%%%%%%%%%%%%%%%%%%%%%%
\begin{theorem}\label{dual1}
Let $\mathbf{\Lambda}=\{\Lambda_{i}\in
B(X_2,Y_{i})\}_{i=1}^{\infty}$ be a $qg$-Riesz basis for $X^{*}_2$
with respect to $\{Y_{i}\}_{i=1}^{\infty}$, then there exist a
sequence $\{\widetilde{\Lambda}_i\in B(X^{*}_2,
Y^{*}_{i})\}_{i=1}^{\infty}$ which is a $pg$-Riesz basis for $X_2$
with respect to $\{Y^{*}_{i}\}_{i=1}^{\infty}$ such that
$$x^*=\sum_{i=1}^{\infty}\Lambda^{*}_{i}\widetilde{\Lambda}_i
x^*,\qquad x^*\in X^{*}_2$$ and $\widetilde{\Lambda}_k
\Lambda^{*}_{i}=\delta_{k,i}I.$
\end{theorem}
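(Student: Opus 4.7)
The plan is to construct $\widetilde{\Lambda}_i$ from the inverse of the synthesis operator of $\mathbf{\Lambda}$ and then verify the four properties (boundedness, the reproducing formula, biorthogonality, and the $pg$-Riesz basis condition) by routine identifications.

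First I would observe that, by the discussion immediately preceding Theorem~\ref{qs}, since $\mathbf{\Lambda}$ is a $qg$-Riesz basis for $X_2^*$, the synthesis operator
\[
T_{\Lambda}:\Bigl(\sum_{i=1}^{\infty}\bigoplus Y_i^*\Bigr)_{l_q}\longrightarrow X_2^*,\qquad T_{\Lambda}\{g_i\}=\sum_{i=1}^{\infty}\Lambda_i^*g_i
\]
is an invertible bounded operator. Denote by $\pi_k$ the $k$-th coordinate projection on $\bigl(\sum\oplus Y_i^*\bigr)_{l_q}$ and define $\widetilde{\Lambda}_k:=\pi_k\circ T_{\Lambda}^{-1}\in B(X_2^*,Y_k^*)$. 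Boundedness is then immediate since $\|\pi_k\|\le 1$ and $T_{\Lambda}^{-1}$ is bounded.

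Next I would check the reproducing formula and biorthogonality. For any $x^*\in X_2^*$, the sequence $\{\widetilde{\Lambda}_i x^*\}_i$ equals $T_{\Lambda}^{-1}x^*\in \bigl(\sum\oplus Y_i^*\bigr)_{l_q}$, so Proposition~\ref{Bessel} shows that $\sum_i \Lambda_i^*\widetilde{\Lambda}_i x^* = T_{\Lambda}T_{\Lambda}^{-1}x^*=x^*$, with unconditional convergence. For biorthogonality, given $g\in Y_i^*$ the element $\Lambda_i^*g$ equals $T_{\Lambda}(\mathbf{e}_i\otimes g)$, where $\mathbf{e}_i\otimes g$ has $g$ in slot $i$ and $0$ elsewhere; applying $\widetilde{\Lambda}_k=\pi_k T_{\Lambda}^{-1}$ gives exactly $\delta_{k,i}\,g$, i.e.\ $\widetilde{\Lambda}_k\Lambda_i^*=\delta_{k,i}I$.

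Finally, for the $pg$-Riesz basis property of $\{\widetilde{\Lambda}_i\}$ for $X_2$ (with respect to $\{Y_i^*\}$), I would pass to the adjoint. Since $\widetilde{\Lambda}_i=\pi_i T_{\Lambda}^{-1}$, taking adjoints (and using reflexivity of $X_2$ and of each $Y_i$, so that $(\sum\oplus Y_i^*)_{l_q}^*=(\sum\oplus Y_i)_{l_p}$) yields $\widetilde{\Lambda}_i^* = (T_{\Lambda}^*)^{-1}\pi_i^*$, where $\pi_i^*:Y_i\hookrightarrow(\sum\oplus Y_i)_{l_p}$ is the canonical inclusion into the $i$-th coordinate. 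Hence the natural synthesis operator of $\widetilde{\mathbf{\Lambda}}$,
\[
T_{\widetilde{\Lambda}}:\Bigl(\sum_{i=1}^{\infty}\bigoplus Y_i\Bigr)_{l_p}\longrightarrow X_2,\qquad T_{\widetilde{\Lambda}}\{y_i\}=\sum_{i=1}^{\infty}\widetilde{\Lambda}_i^* y_i,
\]
coincides with $(T_{\Lambda}^*)^{-1}$, which is bounded and invertible. This gives the two-sided $l_p$-estimate required in the Riesz-basis condition, while $g$-completeness follows at once from the injectivity of $T_{\Lambda}^{-1}$: if $\widetilde{\Lambda}_i x^*=0$ for all $i$ then $T_{\Lambda}^{-1}x^*=0$, so $x^*=0$.

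The only mildly delicate point is the bookkeeping in the last step: one must invoke reflexivity to identify the dual of $(\sum\oplus Y_i^*)_{l_q}$ with $(\sum\oplus Y_i)_{l_p}$ and to view $(T_{\Lambda}^*)^{-1}$ as an operator into $X_2=X_2^{**}$; everything else is a direct consequence of the invertibility of $T_{\Lambda}$ and the definitions.
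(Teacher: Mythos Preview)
Your proof is correct and follows essentially the same route as the paper: both define $\widetilde{\Lambda}_i$ as the $i$-th coordinate of $T_{\Lambda}^{-1}$ (the paper writes this elementwise as $\widetilde{\Lambda}_i(x^*)=g_i$ where $x^*=\sum\Lambda_i^*g_i$, you write it as $\pi_i\circ T_{\Lambda}^{-1}$), and both conclude the $pg$-Riesz basis property by showing that $T_{\widetilde{\Lambda}}=(T_{\Lambda}^*)^{-1}$ is invertible. The only cosmetic difference is that the paper first records the explicit $qg$-frame bounds $1/B_{\Lambda},\,1/A_{\Lambda}$ for $\{\widetilde{\Lambda}_i\}$ directly from the Riesz-basis inequalities before passing to the adjoint, whereas you skip straight to invertibility of $T_{\widetilde{\Lambda}}$; those explicit bounds are used downstream in Corollary~\ref{dual2} and the proposition following it, so it is worth noting them.
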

%%%%%%%%%%%%%%%%%%%%%%%%%%%%%%%%%%%%%%%%%%%%%%%%%%%%%%%%%%%%%%%%%%%%%%%%%%%%%%%%%%%%%%%%%%%%%%%%%%%%%%%%%%%%%%%%%%%%%%%%%%%%
\begin{proof}
Since $\mathbf{\Lambda}=\{\Lambda_{i}\in
B(X_2,Y_{i})\}_{i=1}^{\infty}$ is a $pg$-frame for $X_2,$ Theorem
\ref{44} implies that for every $x^*\in X^{*}_2$ there exists
$\{g_i\}_{i=1}^{\infty}\in\left(\sum_{i=1}^ {\infty}\bigoplus
Y_{i}^{*}\right)_{l_q}$  such that $x^*=\sum_{i=1}^{\infty}
\Lambda^{*}_{i} g_i.$ Let us define the operator
$$\widetilde{\Lambda}_i:X^{*}_2\rightarrow Y_{i}^{*},\qquad \widetilde{\Lambda}_i(x^*)=g_i.$$
By Theorem \ref{qs}, $\widetilde{\Lambda}_i$ is well defined. Let
$A_{\Lambda},B_{\Lambda}$ be the $qg$-Riesz basis bounds for
$\mathbf{\Lambda}=\{\Lambda_{i}\in B(X_2,Y_{i})\}_{i=1}^{\infty}$.
Then for any
$\{g_i\}_{i=1}^{\infty}\in\left(\sum_{i=1}^{\infty}\bigoplus
Y_{i}^{*}\right)_{l_q}$ we have
\[A_{\Lambda}\left(\sum_{i=1}^{\infty}\|g_{i}\|^q\right)^{\frac{l}{q}}\leq \|\sum_{i=1}^{\infty}\Lambda^{*}_{i}g_i\|\leq B_{\Lambda}\left(\sum_{i=1}^{\infty}\|g_{i}\|^q\right)^{\frac{l}{q}}.\] Therefore
$$\frac{1}{B_{\Lambda}}\|\sum_{i=1}^{\infty} \Lambda^{*}_{i}g_i\|\leq \left(\sum_{i=1}
^{\infty}\|g_{i}\|^q\right)^{\frac{l}{q}}\leq \frac{1}{A_{\Lambda}}\|\sum_{i=1}^{\infty}\Lambda^{*}_{i}g_i\|, $$
for all $\{g_i\}_{i=1}^{\infty}\in\left(\sum_{i=1}^{\infty}\bigoplus Y_{i}^{*}\right)_{l_q}.$ Hence we get
$$\frac{1}{B_{\Lambda}}\|x^*\|\leq \left(\sum_{i=1}^{\infty}\|\widetilde{\Lambda}_i(x^*)\|^q\right)^
{\frac{l}{q}}\leq \frac{1}{A_{\Lambda}}\|x^*\|,\quad x^*\in X_2^*.
$$ This implies that $\{\widetilde{\Lambda}_i\in
B(X^{*}_2,Y^{*}_{i})\}_{i=1}^{\infty}$ is a $qg$-frame for $X^{*}_2$
with respect to $\{Y^{*}_{i}\}_{i=1}^{\infty}$ with bounds
$\frac{1}{A_{\Lambda}}$ and $\frac{1}{B_{\Lambda}}$ and
$$x^*=\sum_{i=1}^{\infty}\Lambda^{*}_{i}\widetilde{\Lambda}_i
x^*,\quad x^*\in X^{*}_2$$ and $\widetilde{\Lambda}_k
\Lambda^{*}_{i} =\delta_{k,i}I.$ From other hand the synthesis
operator is invertible and
$U_{\widetilde{\Lambda}}=T_{\Lambda}^{-1},$ therefore
$U_{\widetilde{\Lambda}}$ is invertible. So by lemma \ref{gf},
$U^{*}_{\widetilde{\Lambda}}=T_{\widetilde{\Lambda}}$ is invertible
and therefore $\{\widetilde{\Lambda}_{i}\}_{i\in \mathbb{N}}$ is a
$pg$-Riesz basis for $X_2.$
\end{proof}
%%%%%%%%%%%%%%%%%%%%%%%%%%%%%%%%%%%%%%%%%%%%%%%%%%%%%%%%%%%%%%%%%%%%%%%%%%%%%%%%%%%%%%%%%%%%%%%%%%%%%%%%%%%%%%%%%%%%%%%%
\begin{corollary}\label{dual2}
Let $\mathbf{\Theta}=\{\Theta_{i}\in
B(X^{*}_{1},Y^{*}_{i})\}_{i=1}^{\infty}$ be a $pg$-Riesz basis for
$X_1$ with respect to $\{Y^{*}_{i}\}_{i=1}^{\infty}$ with bounds
$A_{\Theta},B_{\Theta}$, then there exists a sequence
$\{\widetilde{\Theta}_i\in B(X_1,Y_{i})\}_{i=1}^{\infty}$ which is a
$qg$-Riesz basis for $X^{*}_1$ with respect to
$\{Y_{i}\}_{i=1}^{\infty}$ with bounds
$\frac{1}{B_{\Theta}},\frac{1}{A_{\Theta}}$ and
$$x=\sum_{i=1}^{\infty}\Theta^{*}_{i}\widetilde{\Theta}_i x,\qquad x\in X_1,$$ and
$\widetilde{\Theta}_k \Theta^{*}_{i}=\delta_{k,i}I.$
\end{corollary}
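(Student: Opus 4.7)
The plan is to mirror the proof of Theorem \ref{dual1}, interchanging the roles of $X_2$ with $X_1^*$, of $Y_i$ with $Y_i^*$, and of the exponents $p$ and $q$; this swap is legitimate because $X_1$ and every $Y_i$ is reflexive. First I would use the invertibility characterization of a $pg$-Riesz basis (the direct analogue of the equivalence stated immediately after Definition \ref{qg}): since $\mathbf{\Theta}$ is a $pg$-Riesz basis for $X_1=(X_1^*)^*$, the synthesis operator
$T_\Theta:\bigl(\sum_i\bigoplus Y_i\bigr)_{l_p}\to X_1$, $\{g_i\}\mapsto\sum_i\Theta_i^* g_i$,
is a bounded invertible map with $\|T_\Theta\|\le B_\Theta$ and $\|T_\Theta^{-1}\|\le 1/A_\Theta$.

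Next I would define $\widetilde{\Theta}_i:X_1\to Y_i$ by $\widetilde{\Theta}_i x:=(T_\Theta^{-1}x)_i$. Each $\widetilde{\Theta}_i$ is then bounded, the reconstruction $x=\sum_i\Theta_i^*\widetilde{\Theta}_i x$ holds tautologically, and the biorthogonality $\widetilde{\Theta}_k\Theta_i^*=\delta_{k,i}I$ follows by applying $T_\Theta^{-1}$ to $\Theta_i^* g$, whose preimage under $T_\Theta$ is the sequence supported only at index $i$ with value $g$. Substituting $\{g_i\}=T_\Theta^{-1}x$ into the Riesz bounds for $\mathbf{\Theta}$ yields
$\frac{1}{B_\Theta}\|x\|\le\bigl(\sum_i\|\widetilde{\Theta}_i x\|^p\bigr)^{1/p}\le\frac{1}{A_\Theta}\|x\|$,
so $\{\widetilde{\Theta}_i\}$ is a $pg$-frame for $X_1$ with the claimed frame bounds.

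Finally, to upgrade to a $qg$-Riesz basis for $X_1^*$: by construction $U_{\widetilde{\Theta}}=T_\Theta^{-1}$, which is invertible. Lemma \ref{gf} identifies $T_{\widetilde{\Theta}}$ with $U_{\widetilde{\Theta}}^*$, so $T_{\widetilde{\Theta}}$ is invertible as well; by the characterization immediately after Definition \ref{qg} this is equivalent to $\widetilde{\Theta}$ being a $qg$-Riesz basis for $X_1^*$, and the precise Riesz bounds $\frac{1}{B_\Theta},\frac{1}{A_\Theta}$ then drop out of the identities $\|T_{\widetilde{\Theta}}\|=\|T_\Theta^{-1}\|$ and $\|T_{\widetilde{\Theta}}^{-1}\|=\|T_\Theta\|$ combined with the norm estimates above. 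The main obstacle is purely bookkeeping: ensuring that under the $X\leftrightarrow X^*$, $Y_i\leftrightarrow Y_i^*$, $p\leftrightarrow q$ swap the reflexivity hypotheses correctly identify the double duals, so that Lemma \ref{gf}(i) may be invoked on $\widetilde{\Theta}$ in this dualized setting.
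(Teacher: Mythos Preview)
Your proposal is correct and is precisely the argument the paper has in mind: the paper states the result as a corollary with no proof, and the intended justification is exactly the $X_2\leftrightarrow X_1^*$, $Y_i\leftrightarrow Y_i^*$, $p\leftrightarrow q$ duality swap (legitimized by reflexivity) that you spell out, which reduces the statement to Theorem~\ref{dual1}. Your write-up simply unpacks the proof of Theorem~\ref{dual1} in the dualized setting rather than citing it, but the content is identical.
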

%%%%%%%%%%%%%%%%%%%%%%%%%%%%%%%%%%%%%%%%%%%%%%%%%%%%%%%%%%%%%%%%%%%%%%%%%%%%%%%%%%%%%%%%%%%%%%%%%%%%%%%%%%%%%%%%%%%%%%%%%
\begin{proposition}
Let $\mathbf{\Lambda}=\{\Lambda_{i}\in
B(X_2,Y_{i})\}_{i=1}^{\infty}$ be a $qg$-Riesz basis  for
$X^{*}_{2}$ with respect to $\{Y_{i}\}_{i=1}^{\infty}$ with bound
$A_{\Lambda}, B_{\Lambda}$ and $\mathbf{\Theta}=\{\Theta_{i}\in
B(X^{*}_{1} ,Y^{*}_{i})\}_{i=1}^{\infty}$ be a $pg$-Riesz basis for
$X_1$ with respect to $\{Y^{*}_{i}\}_{i=1}^{\infty}$ with bounds
$A_{\Theta}, B_{\Theta}$. If $m\in l^{\infty},$ then
$$A_{\Lambda}A_{\Theta} \|m\|_{\infty}\leq\|\mathbf{M}_{m,\Lambda,\Theta}\|\leq B_{\Lambda}B_{\Theta} \|m\|_{\infty}.$$
\end{proposition}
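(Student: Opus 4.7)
The upper bound is essentially already earned: $\mathbf{\Lambda}$, being a $qg$-Riesz basis for $X_2^*$, is in particular a $pg$-frame, hence a $pg$-Bessel sequence for $X_2$ with bound $B_\Lambda$, and $\mathbf{\Theta}$, being a $pg$-Riesz basis for $X_1$, is a $qg$-Bessel sequence for $X_1^*$ with bound $B_\Theta$. Proposition \ref{mul} then directly gives $\|\mathbf{M}_{m,\Lambda,\Theta}\|\le B_\Lambda B_\Theta\|m\|_\infty$.

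For the lower bound, the plan is to probe $\mathbf{M}_{m,\Lambda,\Theta}$ with elements that activate a single index $k$. Corollary \ref{dual2} supplies the biorthogonal dual $\{\widetilde{\Theta}_i\in B(X_1,Y_i)\}$, which is a $qg$-Riesz basis for $X_1^*$ with bounds $1/B_\Theta,1/A_\Theta$ and satisfies $\widetilde{\Theta}_k\Theta_i^*=\delta_{k,i}I$. Taking adjoints in this identity, and using reflexivity of $X_1$ and of each $Y_i$ to identify $\Theta_i^{**}$ with $\Theta_i$, yields the dual biorthogonality
$$\Theta_i\widetilde{\Theta}_k^{*}=\delta_{k,i}I\quad\text{on } Y_k^{*}.$$

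Fix $k\in\mathbb{N}$ and a nonzero $y_k^{*}\in Y_k^{*}$, and set $g:=\widetilde{\Theta}_k^{*}y_k^{*}\in X_1^{*}$. By the dual biorthogonality, $\Theta_i g=\delta_{k,i}y_k^{*}$, so the series collapses:
$$\mathbf{M}_{m,\Lambda,\Theta}(g)=\sum_{i=1}^{\infty}m_i\Lambda_i^{*}\Theta_i g=m_k\Lambda_k^{*}y_k^{*}.$$
Applying the $qg$-Riesz basis inequality for $\mathbf{\Lambda}$ to the single-term sequence supported at position $k$ gives $\|\Lambda_k^{*}y_k^{*}\|\ge A_\Lambda\|y_k^{*}\|$, and the upper $qg$-Riesz bound for $\widetilde{\Theta}$ on the same one-term sequence gives $\|g\|=\|\widetilde{\Theta}_k^{*}y_k^{*}\|\le(1/A_\Theta)\|y_k^{*}\|$. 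Combining,
$$\|\mathbf{M}_{m,\Lambda,\Theta}\|\ge\frac{\|\mathbf{M}_{m,\Lambda,\Theta}(g)\|}{\|g\|}\ge\frac{|m_k|A_\Lambda\|y_k^{*}\|}{(1/A_\Theta)\|y_k^{*}\|}=|m_k|A_\Lambda A_\Theta.$$
Taking the supremum over $k$ yields $\|\mathbf{M}_{m,\Lambda,\Theta}\|\ge A_\Lambda A_\Theta\|m\|_\infty$.

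The only nontrivial step is extracting the dual biorthogonality $\Theta_i\widetilde{\Theta}_k^{*}=\delta_{k,i}I$ from the relation in Corollary \ref{dual2}; once this is in hand, the rest is a one-line estimate using the single-coordinate case of the Riesz basis inequalities. Reflexivity of $X_1,X_2$ and the $Y_i$ (standing assumption in this section) is what lets us pass between $\Theta_i^{**}$ and $\Theta_i$ and, equally, guarantees that $g=\widetilde{\Theta}_k^{*}y_k^{*}$ really lies in $X_1^{*}$ as needed for testing $\mathbf{M}_{m,\Lambda,\Theta}$.
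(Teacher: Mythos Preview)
Your argument is correct and follows essentially the same route as the paper: invoke Proposition~\ref{mul} for the upper bound, and for the lower bound test $\mathbf{M}_{m,\Lambda,\Theta}$ on vectors $g=\widetilde{\Theta}_k^{*}y_k^{*}$ coming from the dual of Corollary~\ref{dual2}, so that the multiplier collapses to $m_k\Lambda_k^{*}y_k^{*}$, then use the single-term Riesz inequalities. Your write-up is in fact more explicit than the paper's: you spell out the adjoint step $\Theta_i\widetilde{\Theta}_k^{*}=\delta_{k,i}I$ (which the paper uses silently) and you make clear exactly which bounds give $\|\Lambda_k^{*}y_k^{*}\|\ge A_\Lambda\|y_k^{*}\|$ and $\|\widetilde{\Theta}_k^{*}y_k^{*}\|\le(1/A_\Theta)\|y_k^{*}\|$.
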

%%%%%%%%%%%%%%%%%%%%%%%%%%%%%%%%%%%%%%%%%%%%%%%%%%%%%%%%%%%%%%%%%%%%%%%%%%%%%%%%%%%%%%%%%%%%%%%%%%%%%%%%%%%%%%%%%%%%%%%%
\begin{proof}
By proposition \ref{mul}, it is enough to show that we have the
lower bound. Corollary \ref{dual2} implies that there exists a
sequence $\{\widetilde{\Theta}_i\in B(X_1,Y_{i})\}_{i=1}^{\infty}$
which is a $qg$-Riesz basis  for $X^{*}_{1}$ (therefore a $pg$-frame
for $X_1$) with respect to $\{Y_{i}\}_{i=}^{\infty}$  with bounds
$\frac{1}{B_{\Theta}},\frac{1}{A_{\Theta}}$ and
$$x=\sum_{i=1}^{\infty}\Theta^{*}_{i}\widetilde {\Theta}_i x,\qquad
x\in X_1,$$ and $\widetilde{\Theta}_k \Theta^{*}_{i}=\delta_{k,i}I.$
Let us fix $0\neq y_k^*\in Y_k^*$ for each $k\in\mathbb{N},$ then we
have
\begin{equation*}
\begin{aligned}
\|\mathbf{M}_{m,\Lambda,\Theta}\|=&\sup_{0\neq g\in X_{1}^* }\frac{\|\mathbf{M}_{m,\Lambda,\Theta}g\|}{\|g\|}=\sup_{0
\neq g\in X_{1}^* }\frac{\|\sum_{i=1}^{\infty}m_i\Lambda_i^*\Theta_i g\|}{\|g\|}
\\
\geq&\sup_{k\in\mathbb{N}
}\frac{\|\sum_{i=1}^{\infty}m_i\Lambda_{i}^{*}\Theta_i
(\widetilde{\Theta}_ {k})^{*}y_k^*
\|}{\|(\widetilde{\Theta}_{k})^{*}y_k^* \|}
\\
=&\sup_{k\in\mathbb{N}}\frac{\|m_k \Lambda_k^* y_k^*
\|}{\|(\widetilde{\Theta}_{k})^{*}y_k^* \|}
\\
=&\sup_{k\in\mathbb{N} }|m_k |\frac{\|\Lambda_k^* y_k^*
\|}{\|(\widetilde{\Theta}_{k})^{*}y_k^* \|}
\\
\geq& A_{\Lambda}A_{\Theta}\|m\|_{\infty}.
\end{aligned}
\end{equation*}
\end{proof}
%%%%%%%%%%%%%%%%%%%%%%%%%%%%%%%%%%%%%%%%%%%%%%%%%%%%%%%%%%%%%%%%%%%%%%%%%%%%%%%%%%%%%%%%%%%%%%%%%%%%%%%%%%%%%%%%%%%%%%
\begin{theorem}
Let $\mathbf{\Lambda}=\{\Lambda_{i}\in
B(X_2,Y_{i})\}_{i=1}^{\infty}$ be a $qg$-Riesz basis for $X^{*}_2$
with respect to $\{Y_{i}\}_{i=1}^{\infty}$ and
$\mathbf{\Theta}=\{\Theta_{i}\in
B(X^{*}_{1},Y^{*}_{i})\}_{i=1}^{\infty}$ be a $pg$-Riesz basis for
$X_1$ with respect to $\{Y^{*}_{i}\}_{i=1}^{\infty}$. If
$m=\{m_i\}_{i=1}^{\infty}$
 satisfies  $0<{\inf}_{i\in\mathbb{N}} |m_i|\leq {\sup}_{i\in\mathbb{N}} |m_i|<+\infty,$ then
 $\mathbf{M}_{m,\Lambda,\Theta}$ is invertible.
\end{theorem}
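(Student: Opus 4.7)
My plan is to exhibit an explicit two-sided inverse of $\mathbf{M}_{m,\Lambda,\Theta}$, built from the dual systems furnished by Theorem \ref{dual1} and Corollary \ref{dual2} together with the reciprocal symbol $\{1/m_i\}$, which lies in $l^\infty$ precisely because $\inf_i|m_i|>0$.

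First I would invoke Theorem \ref{dual1} on $\{\Lambda_i\}$ to produce $\{\widetilde{\Lambda}_i\in B(X_2^*,Y_i^*)\}$ satisfying $x^*=\sum_i\Lambda_i^*\widetilde{\Lambda}_ix^*$ for every $x^*\in X_2^*$ and the biorthogonality $\widetilde{\Lambda}_k\Lambda_i^*=\delta_{k,i}I$; recall from its proof that $\{\widetilde{\Lambda}_i\}$ is a $qg$-frame for $X_2^*$. Then I would apply Corollary \ref{dual2} to $\{\Theta_i\}$ to produce $\{\widetilde{\Theta}_i\in B(X_1,Y_i)\}$ satisfying $x=\sum_i\Theta_i^*\widetilde{\Theta}_ix$ for every $x\in X_1$ and $\widetilde{\Theta}_k\Theta_i^*=\delta_{k,i}I$; this sequence is a $qg$-Riesz basis for $X_1^*$, hence in particular a $pg$-Bessel sequence for $X_1$. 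Dualizing the second pair of identities (using reflexivity) yields the companion relations
\[
\Theta_i\widetilde{\Theta}_k^*=\delta_{k,i}I,\qquad g=\sum_i\widetilde{\Theta}_i^*\Theta_ig\quad(g\in X_1^*),
\]
which I will need to collapse sums on the opposite side.

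Next I would define the candidate inverse
\[
N\colon X_2^*\to X_1^*,\qquad Nx^*=\sum_{i=1}^{\infty}\frac{1}{m_i}\,\widetilde{\Theta}_i^*\widetilde{\Lambda}_ix^*,
\]
which is formally the $(p,q)g$-Bessel multiplier associated with $\{1/m_i\}$ and the two dual systems, with the roles of the two base spaces interchanged. Since $\{\widetilde{\Theta}_i\}$ is a $pg$-Bessel sequence for $X_1$, $\{\widetilde{\Lambda}_i\}$ is a $qg$-Bessel sequence for $X_2^*$, and $\{1/m_i\}\in l^\infty$, Proposition \ref{mul} shows that $N$ is well defined and bounded and that the defining series converges unconditionally on all of $X_2^*$.

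Finally I would check $N\mathbf{M}_{m,\Lambda,\Theta}=I_{X_1^*}$ and $\mathbf{M}_{m,\Lambda,\Theta}N=I_{X_2^*}$ by direct computation: writing out each composition as a double sum, the biorthogonality relations $\widetilde{\Lambda}_k\Lambda_i^*=\delta_{k,i}I$ and $\Theta_i\widetilde{\Theta}_k^*=\delta_{k,i}I$ reduce each double sum to a single one, the scalar factors $m_k/m_k$ cancel, and the two reconstruction formulas complete the argument. The only subtle point, and the main obstacle, is justifying the interchange of the two infinite sums; this is legitimate because every series in sight converges unconditionally by Proposition \ref{Bessel}, so sums may be reordered and grouped freely. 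Once that is in place, the rest is pure bookkeeping and the conclusion $\mathbf{M}_{m,\Lambda,\Theta}^{-1}=N$ follows.
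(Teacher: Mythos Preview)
Your proposal is correct and follows essentially the same route as the paper: construct the candidate inverse $N=\mathbf{M}_{1/m,\widetilde{\Theta},\widetilde{\Lambda}}$ from the dual systems of Theorem~\ref{dual1} and Corollary~\ref{dual2}, then verify $N\mathbf{M}_{m,\Lambda,\Theta}=I_{X_1^*}$ and $\mathbf{M}_{m,\Lambda,\Theta}N=I_{X_2^*}$ via the biorthogonality and reconstruction identities. One small remark: the ``interchange of sums'' is less delicate than you suggest, since pushing the bounded operator $\widetilde{\Lambda}_k$ (respectively $\Theta_i$) inside the inner series by continuity immediately collapses that series to a single term, so no genuine reordering of a double sum is required.
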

%%%%%%%%%%%%%%%%%%%%%%%%%%%%%%%%%%%%%%%%%%%%%%%%%%%%%%%%%%%%%%%%%%%%%%%%%%%%%%%%%%%%%%%%%%%%%%%%%%%%%%%%%%%%%%%%%%%%%
\begin{proof}

Let us consider $\{\widetilde{\Lambda}_i\in
B(X^{*}_2,Y^{*}_{i})\}_{i=1}^{\infty}$ and
$\{\widetilde{\Theta}_i\in B(X_1,Y_{i})\}_{i=1}^{\infty}$ which
appear in Proposition \ref{dual1} and Corollary \ref{dual2},
respectively. We prove that
$$(\mathbf{M}_{m,\Lambda,\Theta})^{-1}=\mathbf{M}_{\frac{1}{m},\widetilde{\Theta},\widetilde{\Lambda}}.$$ Let
$ g\in X_{1}^*$, then
\begin{equation*}
\begin{aligned}
\mathbf{M}_{\frac{1}{m},\widetilde{\Theta},\widetilde{\Lambda}}\circ
\mathbf{M}_{m,\Lambda,\Theta}(g)=&\mathbf{M}
_{\frac{1}{m},\widetilde{\Theta},\widetilde{\Lambda}}\big(\sum_{i=1}^{\infty}m_i\Lambda_i^*\Theta_i
g\big)
\\
=&\sum_{k=1}^{\infty}\frac{1}{m_k}(\widetilde{\Theta}_{k})^{*}\widetilde{\Lambda}_k \big(\sum_{i=1}^{\infty}
m_i\Lambda_i^*\Theta_i g\big)
\\
=&\sum_{k=1}^{\infty}\frac{1}{m_k}(\widetilde{\Theta}_{k})^{*} \big(\sum_{i=1}^{\infty}m_i\widetilde{\Lambda}
_k\Lambda_i^*\Theta_i g\big)
\\
=&\sum_{k=1}^{\infty}\frac{1}{m_k}(\widetilde{\Theta}_{k})^{*} (m_k\Theta_k g)
\\
=&g.
\end{aligned}
\end{equation*}
Let us consider $ f\in X_{2}^*$, then
\begin{equation*}
\begin{aligned}
\mathbf{M}_{m,\Lambda,\Theta}\circ \mathbf{M}_{\frac{1}{m},\widetilde{\Theta},\widetilde{\Lambda}}f =&\mathbf{M}_{m,
\Lambda,\Theta}\big(\sum_{k=1}^{\infty}\frac{1}{m_k}(\widetilde{\Theta}_{k})^{*}\widetilde{\Lambda}_k f\big)
\\
=&\sum_{i=1}^{\infty}m_i\Lambda_i^*\Theta_i \big(\sum_{k=1}^{\infty}\frac{1}{m_k}(\widetilde{\Theta}_{k})^{*}
\widetilde{\Lambda}_k f\big)
\\
=&\sum_{i=1}^{\infty}m_i\Lambda_i^* \big(\sum_{k=1}^{\infty}\frac{1}{m_k}\Theta_i(\widetilde{\Theta}_{k})^{*}
\widetilde{\Lambda}_k f\big)
\\
=&\sum_{i=1}^{\infty}m_i\Lambda_i^* (\frac{1}{m_i}\widetilde{\Lambda}_i f)
\\
=&f.
\end{aligned}
\end{equation*}
\end{proof}

In the next results, we show that the $(p,q)g$-Bessel multiplier
$\mathbf{M}=\mathbf{M}_{m,\Lambda,\Theta}$ depends continuously on
its parameters, $m=\{m_{i}\}_{i=1}^{\infty},$
$\mathbf{\Lambda}=\{\Lambda_{i}\}_{i=1}^{\infty}$ and
$\mathbf{\Theta}=\{\Theta_{i}\}_{i=1}^{\infty}.$
%%%%%%%%%%%%%%%%%%%%%%%%%%%%%%%%%%%%%%%%%%%%%%%%%%%%%%%%%%%%%%%%%%%%%%%%%%%%%%%%%%%%%%%%%%%%%%%%%%%%%%%%%%%%%%%%%%%

\begin{theorem}\label{depen}
Let $\mathbf{\Lambda}=\{\Lambda_{i}\in
B(X_2,Y_{i})\}_{i=1}^{\infty}$ be a $pg$-Bessel sequence for $X_2$
with bound $B_{\Lambda}$ and $\mathbf{\Theta}=\{\Theta_{i}\in
B(X_1^*,Y_{i}^{*})\}_{i=1}^{\infty}$ be a $qg$-Bessel sequence
 for $X_1^*$ with bound $B_{\Theta}$.  Let $p_1,q_1>1$
such that $\frac{1}{p_1}+\frac{1}{q_1}=1$ and $m\in l^{\infty}.$ Let
$\mathbf{\Lambda^{(n)}}=\{\Lambda^{(n)}_{i}\in
B(X_2,Y_{i})\}_{i=1}^{\infty}$ be a $pg$-Bessel sequence for $X_2$
with bound $B_{\Lambda^{(n)}}$ and
$\mathbf{\Theta^{(n)}}=\{\Theta^{(n)}_{i}\in
B(X_1^*,Y_{i}^{*})\}_{i=1}^{\infty}$ be a $qg$-Bessel sequence for
$X_1^*$ with bound $B_{\Theta^{(n)}}$ for all $n\in \mathbb{N}.$

Then
\begin{enumerate}
\item[(1)] If $\|m^{(n)}-m\|_{p_1}\rightarrow 0,$ then
$\|\mathbf{M}_{m^{(n)},\Lambda,\Theta}-\mathbf{M}_{m,\Lambda,\Theta}\|\rightarrow
0,$ as $n\rightarrow \infty.$
\item[(2)]If $m\in l^{p_1}$ and $\{\Theta^{(n)}_{i}\}_{i=1}^{\infty}$ converges to $\{\Theta_{i}\}_{i=1}^{\infty}$ in
$l^{q_1}$-sense, then
$$\|\mathbf{M}_{m,\Lambda,\Theta^{(n)}}-\mathbf{M}_{m,\Lambda,\Theta}\|\rightarrow
0,\quad n\rightarrow\infty. $$
\item[(3)] If $m\in l^{p_1}$ and $\{\Lambda^{(n)}_{i}\}_{i=1}^{\infty}$ converges to $\{\Lambda_{i}\}_{i=1}^{\infty}$ in
$l^{q_1}$-sense, then
$$\|\mathbf{M}_{m,\Lambda^{(n)},\Theta}-\mathbf{M}_{m,\Lambda,\Theta}\|\rightarrow
0,\quad n\rightarrow\infty. $$
\item[(4)] Let
$$B_1=\sup_{n\in \mathbb{N}}B_{\Lambda^{(n)}}<+\infty,\quad B_2=\sup_{n\in \mathbb{N}}B_{\Theta^{(n)}}<+\infty.$$
If $\|m^{(n)}-m\|_{l^{p_1}}\rightarrow 0$ and $\{\Theta^{(n)}_{i}\}_{i=1}^{\infty}$
and $\{\Lambda^{(n)}_{i}\}_{i=1}^{\infty}$ converge
to $\{\Theta_{i}\}_{i=1}^{\infty}$ and $\{\Lambda_{i}\}_{i=1}^{\infty}$ in $l^{q_1}$-sense, respectively, then
$$\|\mathbf{M}_{m^{(n)},\Lambda^{(n)},\Theta^{(n)}}-\mathbf{M}_{m,\Lambda,\Theta}\|\rightarrow
0,\quad n\rightarrow\infty. $$
\end{enumerate}
\end{theorem}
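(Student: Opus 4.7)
The plan is to handle the four parts by a single two-step pattern: bound the difference of multipliers pointwise on $g$, then reduce the resulting sum via H\"older on the symbol/operator-norm pair in $(l^{p_1}, l^{q_1})$. The tool that makes everything go through is the pointwise bound $\|\Lambda_i x\| \leq B_\Lambda \|x\|$ (and analogously $\|\Theta_i g\| \leq B_\Theta \|g\|$), which follows from the Bessel inequality since each term of $\sum_i \|\Lambda_i x\|^p$ is dominated by the total. The embedding $\|\cdot\|_\infty \leq \|\cdot\|_{p_1}$ also lets us import $l^{p_1}$ data into the $l^\infty$ estimate of Proposition \ref{mul}.

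For part (1), I would exploit linearity in the symbol: $\mathbf{M}_{m^{(n)},\Lambda,\Theta}-\mathbf{M}_{m,\Lambda,\Theta}=\mathbf{M}_{m^{(n)}-m,\Lambda,\Theta}$, so Proposition \ref{mul} gives
\[
\|\mathbf{M}_{m^{(n)},\Lambda,\Theta}-\mathbf{M}_{m,\Lambda,\Theta}\|\leq B_\Lambda B_\Theta \|m^{(n)}-m\|_\infty \leq B_\Lambda B_\Theta \|m^{(n)}-m\|_{p_1},
\]
which tends to $0$.

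For part (2), I write the difference applied to $g$ as $\sum_i m_i \Lambda_i^*(\Theta_i^{(n)}-\Theta_i)g$ and dualize:
\[
\bigl\|(\mathbf{M}_{m,\Lambda,\Theta^{(n)}}-\mathbf{M}_{m,\Lambda,\Theta})g\bigr\|\leq \sup_{\|x\|\leq 1}\sum_i |m_i|\,\|\Lambda_i x\|\,\|(\Theta_i^{(n)}-\Theta_i)g\|.
\]
I would absorb $\|\Lambda_i x\|\leq B_\Lambda\|x\|$ pointwise and $\|(\Theta_i^{(n)}-\Theta_i)g\|\leq \|\Theta_i^{(n)}-\Theta_i\|\,\|g\|$, then finish with two-term H\"older on the $(l^{p_1}, l^{q_1})$ pair to obtain $B_\Lambda\|m\|_{p_1}\|\Theta^{(n)}-\Theta\|_{q_1}\|g\|$, which forces convergence. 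Part (3) is the mirror image, absorbing $\|\Theta_i g\|\leq B_\Theta\|g\|$ instead.

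For part (4), I would telescope:
\[
\mathbf{M}_{m^{(n)},\Lambda^{(n)},\Theta^{(n)}}-\mathbf{M}_{m,\Lambda,\Theta}=\mathbf{M}_{m^{(n)}-m,\Lambda^{(n)},\Theta^{(n)}}+\bigl(\mathbf{M}_{m,\Lambda^{(n)},\Theta^{(n)}}-\mathbf{M}_{m,\Lambda,\Theta^{(n)}}\bigr)+\bigl(\mathbf{M}_{m,\Lambda,\Theta^{(n)}}-\mathbf{M}_{m,\Lambda,\Theta}\bigr),
\]
and estimate each summand using (1)--(3), substituting $B_1, B_2$ for the varying Bessel bounds $B_{\Lambda^{(n)}}, B_{\Theta^{(n)}}$ wherever they appear. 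The main obstacle is really cosmetic: a naive three-term H\"older with exponents $(p_1, p, q_1)$ for parts (2)--(3) would have reciprocal sum $1+1/p>1$ and fail, so it is essential to first absorb $\|\Lambda_i x\|$ (or $\|\Theta_i g\|$) via the pointwise Bessel bound before applying the two-term H\"older on the symbol/operator-norm pair.
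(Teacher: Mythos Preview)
Your proposal is correct and follows essentially the same route as the paper: linearity in the symbol plus Proposition~\ref{mul} for part~(1), the pointwise bound $\|\Lambda_i\|=\|\Lambda_i^*\|\leq B_\Lambda$ followed by two-term H\"older in $(l^{p_1},l^{q_1})$ for parts~(2)--(3), and the same three-term telescoping for part~(4). The only cosmetic difference is that the paper bounds $\|\Lambda_i^* h_i\|\leq\|\Lambda_i^*\|\,\|h_i\|$ directly rather than dualizing to $\sup_{\|x\|\leq 1}$ first, which amounts to the same estimate.
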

%%%%%%%%%%%%%%%%%%%%%%%%%%%%%%%%%%%%%%%%%%%%%%%%%%%%%%%%%%%%%%%%%%%%%%%%%%%%%%%%%%%%%%%%%%%%%%%%%%%%%%%%%%%%%%%%%%%%%%
\begin{proof}
(1) Using proof of the Proposition \ref{mul} we have
\begin{equation*}
\begin{aligned}
\|\mathbf{M}_{m^{(n)},\Lambda,\Theta}-\mathbf{M}_{m,\Lambda,\Theta}\|=&\|\mathbf{M}_{m^{(n)}-m,\Lambda,\Theta}\|
\\
\leq& B_{\Lambda} B_{\Theta}\|m^{(n)}-m\|_{\infty}
\\
\leq& B_{\Lambda} B_{\Theta}\|m^{(n)}-m\|_{p_1}\rightarrow 0.
\end{aligned}
\end{equation*}
\\
(2) For $g\in X_1^*,$ we have
 \begin{equation*}
\begin{aligned}
\|\mathbf{M}_{m,\Lambda,\Theta^{(n)}}g-\mathbf{M}_{m,\Lambda,\Theta}g\|=&\|\sum_{i=1}^{\infty}m_i\Lambda_i^*(\Theta^
{(n)}_i-\Theta_i)g\|
\\
\leq&\sum_{i=1}^{\infty}|m_i|\|\Lambda_i^*\|\|(\Theta^{(n)}_i-\Theta_i)g\|
\\
\leq&\sum_{i=1}^{\infty}B_\Lambda|m_i|\|(\Theta^{(n)}_i-\Theta_i)g\|
\\
\leq&B_\Lambda\|m\|_{p_1}\left(\sum_{i=1}^{\infty}\|(\Theta^{(n)}_i-\Theta_i)g\|^{q_1}\right)^{\frac{1}{q_1}}.
\end{aligned}
\end{equation*}
So
$$\|\mathbf{M}_{m,\Lambda,\Theta^{(n)}}-\mathbf{M}_{m,\Lambda,\Theta}\|\leq B_\Lambda\|m\|_{p_1}\left(
\sum_{i=1}^{\infty}\|(\Theta^{(n)}_i-\Theta_i)\|^{q_1}\right)^{\frac{1}{q_1}}\rightarrow
0.$$ (3) It is similar to the proof of (2). \\
(4) We have
\begin{equation}\label{l2}
\|\mathbf{M}_{m^{(n)},\Lambda^{(n)},\Theta^{(n)}}-\mathbf{M}_{m,\Lambda^{(n)},\Theta^{(n)}}\|\leq B_1 B_2\|m^{(n)}-m\|_{p_1},
\end{equation}
\begin{equation}\label{l3}
\|\mathbf{M}_{m,\Lambda^{(n)},\Theta^{(n)}}-\mathbf{M}_{m,\Lambda,\Theta^{(n)}}\|\leq
B_2\|m\|_{p_1}\left(\sum_{i=1}^{\infty}\|(\Lambda^{(n)}_i-\Lambda_i)\|^{q_1}\right)^{\frac{1}{q_1}},
\end{equation}
\begin{equation}\label{l4}
\|\mathbf{M}_{m,\Lambda,\Theta^{(n)}}-\mathbf{M}_{m,\Lambda,\Theta}\|\leq
B_\Lambda\|m\|_{p}\left(
\sum_{i=1}^{\infty}\|(\Theta^{(n)}_i-\Theta_i)\|^{q_1}\right)^{\frac{1}{q_1}}.
\end{equation}
Since
\begin{equation*}\label{l1}
\begin{aligned}
\|\mathbf{M}_{m^{(n)},\Lambda^{(n)},\Theta^{(n)}}-\mathbf{M}_{m,\Lambda,\Theta}\|\leq&\|\mathbf{M}_{m^{(n)},\Lambda^{(n)}
,\Theta^{(n)}}-\mathbf{M}_{m,\Lambda^{(n)},\Theta^{(n)}}\|
\\
+&\|\mathbf{M}_{m,\Lambda^{(n)},\Theta^{(n)}}-\mathbf{M}_{m,\Lambda,\Theta^{(n)}}\|
\\
+&\|\mathbf{M}_{m,\Lambda,\Theta^{(n)}}-\mathbf{M}_{m,\Lambda,\Theta}\|,
\end{aligned}
\end{equation*}
  (\ref{l2}), (\ref{l3}),
 (\ref{l4}) imply that
 $$\|\mathbf{M}_{\Lambda^{(n)},\Theta^{(n)},m^{(n)}}-\mathbf{M}_{\Lambda,\Theta,m}\|\rightarrow
0,\quad n\rightarrow\infty. $$
\end{proof}

{\bf Acknowledgements:} The work of P. G\u avru\c ta was partial supported by a grant of Romanian National Authority for Scientific Research, CNCS-UEFISCDI, project number PN-II-ID-JRP-RO-FR-2011-2-11-RO-FR/01.03.2013.\\

\bibliographystyle{amsplain}

\end{document}